\pgfplotsset{width=7cm,compat=1.9}
\tikzset{cross/.style={cross out, draw=black, minimum size=2*(#1-\pgflinewidth), inner sep=0pt, outer sep=0pt}, cross/.default={1pt}}
\newcommand{\ZZ}{\mathbb{Z}}
\newcommand{\RR}{\mathbb{R}}
\newcommand{\NN}{\mathbb{N}}
\newcommand{\PP}{\mathbb{P}}
\newcommand{\cT}{\mathcal{T}}
\newcommand{\cS}{\mathcal{S}}
\DeclareMathOperator{\GL}{GL}
\DeclareMathOperator{\conv}{conv}
\DeclareMathOperator{\width}{width}
\DeclareMathOperator{\Aut}{Aut}
\DeclareMathOperator{\Ehr}{Ehr}
\DeclareMathOperator{\ehr}{ehr}
\DeclareMathOperator{\Hom}{Hom}
\newtheorem{theorem}{Theorem}[section]
\newtheorem{proposition}[theorem]{Proposition}
\newtheorem{lemma}[theorem]{Lemma}
\newtheorem{corollary}[theorem]{Corollary}
\theoremstyle{definition}
\newtheorem{definition}[theorem]{Definition}
\begin{document}

\author[G.\ Hamm]{Girtrude~Hamm}
\address{School of Mathematical Sciences\\University of Nottingham\\Nottingham\\NG7 2RD\\UK}
\email{girtrude.hamm@nottingham.ac.uk}

\subjclass{Primary: 52B20; Secondary: 52C05}
\keywords{Lattice triangles, lattice width, Ehrhart polynomial}

\title{Classification of lattice triangles by their two smallest widths}

\begin{abstract}
We introduce the notion of the second lattice width of a lattice polytope and use this to classify lattice triangles by their width and second width.
This is equivalent to classifying lattice triangles contained in a given rectangle (and no smaller rectangle) up to affine equivalence.
Using this classification we investigate the automorphism groups and Ehrhart theory of lattice triangles.
We also show that the sequence counting lattice triangles contained in dilations of the unit square has generating function equal to the Hilbert series of a degree 8 hypersurface in \(\PP(1,1,1,2,2,2)\).
\end{abstract}
\maketitle

\section{Introduction}
\label{sec:intro}

A \emph{lattice point} is a point in a lattice \(\ZZ^d\).
A \emph{lattice triangle} \(T \subseteq \RR^2\) is the convex hull of three (non-colinear) lattice points, which we call the vertices of \(T\).
More generally, a \emph{lattice polytope} \(P \subseteq \RR^d\) is the convex hull of finitely many lattice points \(v_1, \dots, v_k\) in \(\ZZ^d\).
It is called a \emph{lattice polygon} if it is two-dimensional and it is called a \emph{lattice simplex} if the points \(v_i\) are affinely independent.
We consider lattice polytopes as being defined up to affine unimodular equivalence.
Two lattice polytopes \(P\) and \(P'\) are said to be \emph{affine equivalent} if \(P\) can be mapped to \(P'\) by a change of basis of \(\ZZ^d\) followed by an integral translation.

Recall that for a lattice polytope \(P \subseteq \RR^d\) and a primitive dual vector \(u \in (\ZZ^d)^*\) the width of \(P\) with respect to \(u\) is the lattice length of the interval obtained by projecting \(P\) along the hyperplane with normal vector \(u\); that is, \(\width_u(P) \coloneqq \max_{x \in P}\{u \cdot x\}-\min_{x \in P}\{u \cdot x\}\).
Then the \emph{(first) width} of \(P\), written \(\width^1(P)\), is the minimum width along all non-zero dual vectors \(u\), i.e. \(\width^1(P) \coloneqq \min_{u \in (\ZZ^d)^*\setminus \{0\}} \{\width_u(P)\}\).

Restricting the width of polytopes is a powerful tool towards classifying them.
It was shown in \cite{width_bound} that in each dimension \(d\) there is some constant \( w \in \NN\) such that the number of lattice polytopes with width larger than \(w\) containing \(n\) lattice points is finite.
When \(d >2\) there are infinitely many polytopes with size \(n\) and small width so to classify them one must classify the finite exceptional polytopes with large width and the infinitely many polytopes with small width.
This was done, for example, in the classification of empty four-dimensional lattice simplices \cite{empty_dim_4_simpl}.
Although there are finitely many polygons containing a given number of lattice points there are still polygon classifications which follow a similar pattern. 
For example, the classification of lattice polygons which contain a point from which all other lattice points are visible \cite{all_lattice_pts_visible} and the classification of rational polygons \(P\) such that \(rP\) is a lattice polygon for fixed \(r \in \NN\) both consist of finitely many exceptional polygons and infinitely many polygons with small width.
Here we present a method for classifying infinitely many lattice polytopes with a given width by applying it to the case of lattice triangles.

Up to equivalence, there are infinitely many lattice polytopes with a given width, in dimension at least 2.
In dimension 2, to have a finite collection to classify we introduce the second width.
For a lattice polygon \(P\), we can choose linearly independent directions \(u_1, u_2 \in (\ZZ^2)^*\) such that the tuple \((w_1,w_2)\) of widths with respect to \(u_1\) and \(u_2\) is as small as possible considered under lexicographic order.
We will refer to \(w_2\) as the \emph{second width} of \(P\) written \(\width^2(P)\).
The second width is never less than the first width, so from now on, unless otherwise stated, let \(w_1\) and \(w_2\) be positive integers satisfying \(w_1 \leq w_2\).
We prove in Proposition~\ref{prop:lex_eq_width} that a polygon is always equivalent to a subset of a \(w_1 \times w_2\) rectangle so there are finitely many lattice polygons up to equivalence with first and second width \(w_1\) and \(w_2\).

The main goal of this paper will be to describe the finite set
\[
\cT_{w_1,w_2} \coloneqq \{\conv(v_1,v_2,v_3) : v_1,v_2,v_3 \in \ZZ^2, \width^1(T)=w_1, \width^2(T)=w_2\}/ \sim
\]
of lattice triangles whose first and second widths are \(w_1\) and \(w_2\), where \(\sim\) denotes affine equivalence.
Theorem~\ref{thm:main} achieves this goal by establishing a bijection between \(\cT_{w_1,w_2}\) and the set \(\cS_{w_1,w_2}\) defined as follows.
\begin{definition}\label{def:main}
Let \(\cS_{w_1,w_2}\) be the set of lattice triangles \(T\) such that
\begin{itemize}
    \item If \(w_1<w_2\) then \(T\) is equal to
    \begin{itemize}
    \item[(A)] \(\conv((0,0),(w_1,y_1),(0,w_2))\) where \(0 \leq y_1 \leq (w_2-y_1 \mod w_1)\) or
    \item[(B)] \(\conv((0,0),(w_1,y_1),(x_2,w_2))\) where \(0 < x_2 \leq \frac{w_1}{2}\) and \(0 \leq y_1 \leq w_1-x_2\) or
    \item[(C)] \(\conv((0,y_0),(w_1,0),(x_2,w_2))\) where \(1<x_2 < \frac{w_1}{2}\) and \(0<y_0 <x_2\).
    \end{itemize}
    \item If \(w_1=w_2\) then \(T\) is equal to
    \begin{itemize}
    \item[(A)] \(\conv((0,0),(w_1,y_1),(0,w_1))\) where \(0\leq y_1 \leq (w_1-y_1 \mod w_1)\) or
    \item[(B)] \(\conv((0,0),(w_1,y_1),(x_2,w_1))\) where \(0<x_2\leq \frac{w_1}{2}\) and \(x_2 \leq y_1 \leq w_1-x_2\).
    \end{itemize}
\end{itemize}
\end{definition}
These triangles are each of one of the three types depicted in Figure~\ref{fig:types}.
The triangles in \(\cS_{w_1,w_2}\) for \(w_1 \leq w_2 \leq 4\) are depicted in Figure~\ref{fig:many_triangles}.
\begin{figure}[ht]
    \subfloat[]{
    \begin{tikzpicture}[x=0.5cm,y=0.5cm]
    \draw[dashed] (0,0) -- (3,0) -- (3,4) -- (0,4) -- cycle;
    \filldraw[fill=gray!40] (0,0) -- (0,4) -- (3,1) -- cycle;
    \draw[dotted] (1.5,0) -- (1.5,4);
    \node[draw,circle,inner sep=1.2pt,fill] at (0,0) {};
    \node[draw,circle,inner sep=1.2pt,fill] at (0,4) {};
    \node[draw,circle,inner sep=1.2pt,fill=white] at (3,1) {};
    \end{tikzpicture}
    }\qquad
    \subfloat[]{
    \begin{tikzpicture}[x=0.5cm,y=0.5cm]
    \draw[dashed] (0,0) -- (3,0) -- (3,4) -- (0,4) -- cycle;
    \filldraw[fill=gray!40] (0,0) -- (1,4) -- (3,1) -- cycle;
    \draw[dotted] (1.5,0) -- (1.5,4);
    \node[draw,circle,inner sep=1.2pt,fill] at (0,0) {};
    \node[draw,circle,inner sep=1.2pt,fill=white] at (1,4) {};
    \node[draw,circle,inner sep=1.2pt,fill=white] at (3,1) {};
    \end{tikzpicture}    
    }\qquad
    \subfloat[]{
    \begin{tikzpicture}[x=0.5cm,y=0.5cm]
    \draw[dashed] (0,0) -- (3,0) -- (3,4) -- (0,4) -- cycle;
    \filldraw[fill=gray!40] (0,1) -- (1,4) -- (3,0) -- cycle;
    \draw[dotted] (1.5,0) -- (1.5,4);
    \node[draw,circle,inner sep=1.2pt,fill=white] at (0,1) {};
    \node[draw,circle,inner sep=1.2pt,fill=white] at (1,4) {};
    \node[draw,circle,inner sep=1.2pt,fill] at (3,0) {};
    \end{tikzpicture}
     }
    \centering
    \caption{The three types of triangle in the set \(\cS_{w_1,w_2}\) as in Definition~\ref{def:main}.
    The rectangles are \([0,w_1] \times [0,w_2]\) and the vertical dotted lines divide them in half.
    Black vertices are fixed for a given type while white vertices vary within fixed ranges along the boundary of the rectangle.
    }
    \label{fig:types}
\end{figure}
\begin{theorem} \label{thm:main}
There is a bijection from \(\cS_{w_1,w_2}\) to \(\cT_{w_1,w_2}\) given by the map taking \(T\) to its affine equivalence class.
In particular,
\begin{itemize}
    \item when \(w_1<w_2\) the cardinality of \(\cT_{w_1,w_2}\) is
    \begin{itemize}
        \item \(\frac{w_1^2}{2} +2\) if \(w_1\) and \(w_2\) are even
        \item \(\frac{w_1^2}{2} +1\) if \(w_1\) is even and \(w_2\) is odd
        \item \(\frac{w_1^2}{2} + \frac12\) if \(w_1\) is odd
    \end{itemize}
    \item and when \(w_1=w_2\) the cardinality of \(\cT_{w_1,w_1}\) is
    \begin{itemize}
        \item \(\frac{w_1^2}{4} + \frac{w_1}{2} +1\) if \(w_1\) is even
        \item \(\frac{w_1^2}{4} + \frac{w_1}{2} + \frac14\) if \(w_1\) is odd
    \end{itemize}
\end{itemize}
\end{theorem}

\begin{figure}[ht]
    \centering
\begin{tikzpicture}[x=0.5cm,y=0.5cm]
\node[draw,circle,inner sep=1pt,fill] at (0,0) {};
\node[draw,circle,inner sep=2pt] at (0,0) {};
\node[draw,circle,inner sep=3pt] at (0,0) {};
\draw (0,0-2) -- (0,1-2);
\node[draw,circle,inner sep=1pt,fill] at (0,0-2) {};
\node[draw,circle,inner sep=2pt] at (0,0-2) {};
\node[draw,circle,inner sep=1pt,fill] at (0,1-2) {};
\draw (0,0-5) -- (0,2-5);
\node[draw,circle,inner sep=1pt,fill] at (0,0-5) {};
\node[draw,circle,inner sep=2pt] at (0,0-5) {};
\node[draw,circle,inner sep=1pt,fill] at (0,2-5) {};

\draw (0,0-8) -- (0,2-8);
\node[draw,circle,inner sep=1pt,fill] at (0,0-8) {};
\node[draw,circle,inner sep=1pt,fill] at (0,1-8) {};
\node[draw,circle,inner sep=1pt,fill] at (0,2-8) {};
\draw (0,0-12) -- (0,3-12);
\node[draw,circle,inner sep=1pt,fill] at (0,0-12) {};
\node[draw,circle,inner sep=2pt] at (0,0-12) {};
\node[draw,circle,inner sep=1pt,fill] at (0,3-12) {};

\draw (0,0-16) -- (0,3-16);
\node[draw,circle,inner sep=1pt,fill] at (0,0-16) {};
\node[draw,circle,inner sep=1pt,fill] at (0,1-16) {};
\node[draw,circle,inner sep=1pt,fill] at (0,3-16) {};
\draw (0,0-21) -- (0,4-21);
\node[draw,circle,inner sep=1pt,fill] at (0,0-21) {};
\node[draw,circle,inner sep=2pt] at (0,0-21) {};
\node[draw,circle,inner sep=1pt,fill] at (0,4-21) {};

\draw (0,0-26) -- (0,4-26);
\node[draw,circle,inner sep=1pt,fill] at (0,0-26) {};
\node[draw,circle,inner sep=1pt,fill] at (0,1-26) {};
\node[draw,circle,inner sep=1pt,fill] at (0,4-26) {};

\draw (0,0-31) -- (0,4-31);
\node[draw,circle,inner sep=1pt,fill] at (0,0-31) {};
\node[draw,circle,inner sep=1pt,fill] at (0,2-31) {};
\node[draw,circle,inner sep=1pt,fill] at (0,4-31) {};
\draw[fill=gray!30] (0+1,0-2) -- (1+1,0-2) -- (0+1,1-2) -- cycle;
\node[draw,circle,inner sep=1pt,fill] at (0+1,0-2) {};
\node[draw,circle,inner sep=1pt,fill] at (1+1,0-2) {};
\node[draw,circle,inner sep=1pt,fill] at (0+1,1-2) {};
\draw[fill=gray!30] (0+1,0-5) -- (1+1,0-5) -- (0+1,2-5) -- cycle;
\node[draw,circle,inner sep=1pt,fill] at (0+1,0-5) {};
\node[draw,circle,inner sep=1pt,fill] at (1+1,0-5) {};
\node[draw,circle,inner sep=1pt,fill] at (0+1,2-5) {};
\draw[fill=gray!30] (0+1,0-12) -- (1+1,0-12) -- (0+1,3-12) -- cycle;
\node[draw,circle,inner sep=1pt,fill] at (0+1,0-12) {};
\node[draw,circle,inner sep=1pt,fill] at (1+1,0-12) {};
\node[draw,circle,inner sep=1pt,fill] at (0+1,3-12) {};
\draw[fill=gray!30] (0+1,0-21) -- (1+1,0-21) -- (0+1,4-21) -- cycle;
\node[draw,circle,inner sep=1pt,fill] at (0+1,0-21) {};
\node[draw,circle,inner sep=1pt,fill] at (1+1,0-21) {};
\node[draw,circle,inner sep=1pt,fill] at (0+1,4-21) {};
\draw[fill=gray!30] (0+3,0-5) -- (2+3,0-5) -- (0+3,2-5) -- cycle;
\node[draw,circle,inner sep=1pt,fill] at (0+3,0-5) {};
\node[draw,circle,inner sep=1pt,fill] at (2+3,0-5) {};
\node[draw,circle,inner sep=1pt,fill] at (0+3,2-5) {};

\draw[fill=gray!30] (0+6,0-5) -- (2+6,1-5) -- (0+6,2-5) -- cycle;
\node[draw,circle,inner sep=1pt,fill] at (0+6,0-5) {};
\node[draw,circle,inner sep=1pt,fill] at (2+6,1-5) {};
\node[draw,circle,inner sep=1pt,fill] at (0+6,2-5) {};

\draw[fill=gray!30] (0+3,0-8) -- (2+3,1-8) -- (1+3,2-8) -- cycle;
\node[draw,circle,inner sep=1pt,fill] at (0+3,0-8) {};
\node[draw,circle,inner sep=1pt,fill] at (2+3,1-8) {};
\node[draw,circle,inner sep=1pt,fill] at (1+3,2-8) {};
\draw[fill=gray!30] (0+3,0-12) -- (2+3,0-12) -- (0+3,3-12) -- cycle;
\node[draw,circle,inner sep=1pt,fill] at (0+3,0-12) {};
\node[draw,circle,inner sep=1pt,fill] at (2+3,0-12) {};
\node[draw,circle,inner sep=1pt,fill] at (0+3,3-12) {};

\draw[fill=gray!30] (0+6,0-12) -- (2+6,0-12) -- (1+6,3-12) -- cycle;
\node[draw,circle,inner sep=1pt,fill] at (0+6,0-12) {};
\node[draw,circle,inner sep=1pt,fill] at (2+6,0-12) {};
\node[draw,circle,inner sep=1pt,fill] at (1+6,3-12) {};

\draw[fill=gray!30] (0+3,0-16) -- (2+3,1-16) -- (1+3,3-16) -- cycle;
\node[draw,circle,inner sep=1pt,fill] at (0+3,0-16) {};
\node[draw,circle,inner sep=1pt,fill] at (2+3,1-16) {};
\node[draw,circle,inner sep=1pt,fill] at (1+3,3-16) {};
\draw[fill=gray!30] (0+3,0-21) -- (2+3,0-21) -- (0+3,4-21) -- cycle;
\node[draw,circle,inner sep=1pt,fill] at (0+3,0-21) {};
\node[draw,circle,inner sep=1pt,fill] at (2+3,0-21) {};
\node[draw,circle,inner sep=1pt,fill] at (0+3,4-21) {};

\draw[fill=gray!30] (0+6,0-21) -- (2+6,1-21) -- (0+6,4-21) -- cycle;
\node[draw,circle,inner sep=1pt,fill] at (0+6,0-21) {};
\node[draw,circle,inner sep=1pt,fill] at (2+6,1-21) {};
\node[draw,circle,inner sep=1pt,fill] at (0+6,4-21) {};

\draw[fill=gray!30] (0+3,0-26) -- (2+3,0-26) -- (1+3,4-26) -- cycle;
\node[draw,circle,inner sep=1pt,fill] at (0+3,0-26) {};
\node[draw,circle,inner sep=1pt,fill] at (2+3,0-26) {};
\node[draw,circle,inner sep=1pt,fill] at (1+3,4-26) {};

\draw[fill=gray!30] (0+6,0-26) -- (2+6,1-26) -- (1+6,4-26) -- cycle;
\node[draw,circle,inner sep=1pt,fill] at (0+6,0-26) {};
\node[draw,circle,inner sep=1pt,fill] at (2+6,1-26) {};
\node[draw,circle,inner sep=1pt,fill] at (1+6,4-26) {};
\draw[fill=gray!30] (0+9,0-12) -- (3+9,0-12) -- (0+9,3-12) -- cycle;
\node[draw,circle,inner sep=1pt,fill] at (0+9,0-12) {};
\node[draw,circle,inner sep=1pt,fill] at (3+9,0-12) {};
\node[draw,circle,inner sep=1pt,fill] at (0+9,3-12) {};

\draw[fill=gray!30] (0+13,0-12) -- (3+13,1-12) -- (0+13,3-12) -- cycle;
\node[draw,circle,inner sep=1pt,fill] at (0+13,0-12) {};
\node[draw,circle,inner sep=1pt,fill] at (3+13,1-12) {};
\node[draw,circle,inner sep=1pt,fill] at (0+13,3-12) {};

\draw[fill=gray!30] (0+9,0-16) -- (3+9,1-16) -- (1+9,3-16) -- cycle;
\node[draw,circle,inner sep=1pt,fill] at (0+9,0-16) {};
\node[draw,circle,inner sep=1pt,fill] at (3+9,1-16) {};
\node[draw,circle,inner sep=1pt,fill] at (1+9,3-16) {};

\draw[fill=gray!30] (0+13,0-16) -- (3+13,2-16) -- (1+13,3-16) -- cycle;
\node[draw,circle,inner sep=1pt,fill] at (0+13,0-16) {};
\node[draw,circle,inner sep=1pt,fill] at (3+13,2-16) {};
\node[draw,circle,inner sep=1pt,fill] at (1+13,3-16) {};
\draw[fill=gray!30] (0+9,0-21) -- (3+9,0-21) -- (0+9,4-21) -- cycle;
\node[draw,circle,inner sep=1pt,fill] at (0+9,0-21) {};
\node[draw,circle,inner sep=1pt,fill] at (3+9,0-21) {};
\node[draw,circle,inner sep=1pt,fill] at (0+9,4-21) {};

\draw[fill=gray!30] (0+13,0-21) -- (3+13,2-21) -- (0+13,4-21) -- cycle;
\node[draw,circle,inner sep=1pt,fill] at (0+13,0-21) {};
\node[draw,circle,inner sep=1pt,fill] at (3+13,2-21) {};
\node[draw,circle,inner sep=1pt,fill] at (0+13,4-21) {};

\draw[fill=gray!30] (0+9,0-26) -- (3+9,0-26) -- (1+9,4-26) -- cycle;
\node[draw,circle,inner sep=1pt,fill] at (0+9,0-26) {};
\node[draw,circle,inner sep=1pt,fill] at (3+9,0-26) {};
\node[draw,circle,inner sep=1pt,fill] at (1+9,4-26) {};

\draw[fill=gray!30] (0+13,0-26) -- (3+13,1-26) -- (1+13,4-26) -- cycle;
\node[draw,circle,inner sep=1pt,fill] at (0+13,0-26) {};
\node[draw,circle,inner sep=1pt,fill] at (3+13,1-26) {};
\node[draw,circle,inner sep=1pt,fill] at (1+13,4-26) {};

\draw[fill=gray!30] (0+9,0-31) -- (3+9,2-31) -- (1+9,4-31) -- cycle;
\node[draw,circle,inner sep=1pt,fill] at (0+9,0-31) {};
\node[draw,circle,inner sep=1pt,fill] at (3+9,2-31) {};
\node[draw,circle,inner sep=1pt,fill] at (1+9,4-31) {};
\draw[fill=gray!30] (0+17,0-21) -- (4+17,0-21) -- (0+17,4-21) -- cycle;
\node[draw,circle,inner sep=1pt,fill] at (0+17,0-21) {};
\node[draw,circle,inner sep=1pt,fill] at (4+17,0-21) {};
\node[draw,circle,inner sep=1pt,fill] at (0+17,4-21) {};

\draw[fill=gray!30] (0+22,0-21) -- (4+22,1-21) -- (0+22,4-21) -- cycle;
\node[draw,circle,inner sep=1pt,fill] at (0+22,0-21) {};
\node[draw,circle,inner sep=1pt,fill] at (4+22,1-21) {};
\node[draw,circle,inner sep=1pt,fill] at (0+22,4-21) {};

\draw[fill=gray!30] (0+17,0-26) -- (4+17,2-26) -- (0+17,4-26) -- cycle;
\node[draw,circle,inner sep=1pt,fill] at (0+17,0-26) {};
\node[draw,circle,inner sep=1pt,fill] at (4+17,2-26) {};
\node[draw,circle,inner sep=1pt,fill] at (0+17,4-26) {};

\draw[fill=gray!30] (0+22,0-26) -- (4+22,1-26) -- (1+22,4-26) -- cycle;
\node[draw,circle,inner sep=1pt,fill] at (0+22,0-26) {};
\node[draw,circle,inner sep=1pt,fill] at (4+22,1-26) {};
\node[draw,circle,inner sep=1pt,fill] at (1+22,4-26) {};

\draw[fill=gray!30] (0+17,0-31) -- (4+17,2-31) -- (1+17,4-31) -- cycle;
\node[draw,circle,inner sep=1pt,fill] at (0+17,0-31) {};
\node[draw,circle,inner sep=1pt,fill] at (4+17,2-31) {};
\node[draw,circle,inner sep=1pt,fill] at (1+17,4-31) {};

\draw[fill=gray!30] (0+22,0-31) -- (4+22,3-31) -- (1+22,4-31) -- cycle;
\node[draw,circle,inner sep=1pt,fill] at (0+22,0-31) {};
\node[draw,circle,inner sep=1pt,fill] at (4+22,3-31) {};
\node[draw,circle,inner sep=1pt,fill] at (1+22,4-31) {};

\draw[fill=gray!30] (0+17,0-36) -- (4+17,2-36) -- (2+17,4-36) -- cycle;
\node[draw,circle,inner sep=1pt,fill] at (0+17,0-36) {};
\node[draw,circle,inner sep=1pt,fill] at (4+17,2-36) {};
\node[draw,circle,inner sep=1pt,fill] at (2+17,4-36) {};

\foreach \x in {-1,...,27}{
    \foreach \y in {-37,...,1}{
        \node[cross=1.5pt] at (\x,\y) {};
    }
}
\draw[dashed] (-1,-0.5) -- (27,-0.5);
\draw[dashed] (-1,-2.5) -- (27,-2.5);
\draw[dashed] (-1,-8.5) -- (27,-8.5);
\draw[dashed] (-1,-16.5) -- (27,-16.5);
\draw[dashed] (0.5,1) -- (0.5,-37);
\draw[dashed] (2.5,1) -- (2.5,-37);
\draw[dashed] (8.5,1) -- (8.5,-37);
\draw[dashed] (16.5,1) -- (16.5,-37);
\end{tikzpicture}
    \caption{The triangles \(T \in \cS_{w_1,w_2}\) where \(0 \leq w_1 \leq w_2 \leq 4\).
    Columns (resp. rows) contain triangles with first (resp. second) width \(0\) to \(4\) from left to right (resp. top to bottom).
    When a width 0, degenerate triangle (see Section~\ref{sec:corollaries}) has multiple identical vertices these are denoted by concentric circles.}
    \label{fig:many_triangles}
\end{figure}

The idea of the proof is to use the fact that a triangle with first two widths \(w_1\) and \(w_2\) is equivalent to a subset of a \(w_1 \times w_2\) rectangle. 
It is straightforward to classify the collection of possible \(x\)-coordinates of vertices of a triangle in \([0,w_1] \times [0,w_2]\) up to affine equivalence.
The \(y\)-coordinates of each vertex are then integers in the range \([0,w_2]\) and we use facts about the width of the triangles to bound these integers.
By removing duplicates from the resulting list we obtain the set of triangles \(\cS_{w_1,w_2}\).
It remains to check that these triangles have the desired widths and are distinct.
This is made easier by the fact that when \(w_1<w_2\) we know that there is a unique (up to sign) direction in which the triangle has width \(w_1\).

The definition of first and second width extends to arbitrary dimension \(d\) by considering the minimal tuple of \(d\) widths of a polytope with respect to linearly independent dual vectors.
The methods of this proof extended to three dimensions towards a partial classification of lattice tetrahedra by their widths.
For more details see the sequel to this paper: \cite{tetrahedra}.

A consequence of Theorem~\ref{thm:main} is that we can completely classify lattice triangles by their affine automorphism group.
Another comes from the the fact that a lattice polygon is always equivalent to a subset of a rectangle with dimensions given by its first two widths.
Not only is this an integral part of the proof of Theorem~\ref{thm:main} but it also allows us to classify lattice triangles by the smallest square they are a subset of.
We can extend the classification to include degenerate triangles, that is multi-sets of three colinear lattice points.
If we do so then, up to affine equivalence, the number of lattice triangles which are a subset of \([0,n]^2\) is equal to the cardinality of the set \(nQ \cap \ZZ^4\) where \(Q\) is the four-dimensional simplex
\[
Q \coloneqq \frac12\conv\left(\left(1,0,0,0\right),\left(0,1,0,0\right),\left(0,0,1,0\right),(0,0,0,2),(-2,-2,-2,-2)\right).
\]
Additionally, the sequence counting the triangles in \([0,n]^2\) up to affine equivalence is given by a Hilbert function of a degree 8 hypersurface in \(\PP(1,1,1,2,2,2)\).

The Ehrhart polynomial is an important affine-invariant of a lattice polytope, counting the number of lattice points in its integral dilations.
In two dimensions this reduces to the study of pairs \((b,i)\), where \(b\) is the number of boundary points of a polygon and \(i\) is the number of interior points.
The possible pairs \((b,i)\) for lattice triangles can be plotted (see~Figure~\ref{fig:b_i_main_plot}), hinting at some beautiful combinatorial structures.
Hofscheier--Nill--\"Oberg~\cite{HNO} described infinitely many empty cones in this plot.
They also observed that points in the strips between these cones appear to form periodic patterns.
Using our classification of lattice triangles we will explain why these patterns appear and partially describe them.

In Section~\ref{sec:width} we will formally define the first two widths. We will prove that any lattice polygon with widths \(w\) and \(w'\) in linearly independent directions is equivalent to a subset of a \(w\times w'\) rectangle and as a result prove that \(\cT_{w_1,w_2}\) is equal to the set of triangles which are a subset of a \(w_1 \times w_2\) rectangle and no smaller up to equivalence.
In Section~\ref{sec:main_proof} we prove Theorem~\ref{thm:main}.
Propositions~\ref{prop:surjectivity}, \ref{prop:well-defined} and \ref{prop:injectivity} show that the map taking a triangle to its affine equivalence class is a well-defined, bijective map from \(\cS_{w_1,w_2}\) to \(\cT_{w_1,w_2}\).
In Section~\ref{sec:corollaries} we discuss corollaries of the main theorem.
We classify lattice triangles by their affine automorphism group, examine the generating functions of sequences arising in the classification and identify some related sequences which appear in Sloan's On-Line Encyclopedia of Integer Sequences \cite{OEIS}.
Finally, in Section~\ref{sec:ehrhart} we discuss the Ehrhart theory of lattice triangles.
We do so by studying the plot of the number of boundary and interior points of lattice triangles.
We provide an explanation for the periodic patterns which appear in strips of points in this plot and, by colouring the plot with the first and second widths of triangles realising each point, we provide an intuitive description of the triangles appearing in these strips.

\subsection*{Acknowledgements}
I would like to thank my supervisor Alexander Kasprzyk for introducing me to this problem and for much helpful guidance throughout.
I am also grateful to my supervisor Johannes Hofscheier for his valuable feedback and suggestions.
This research was supported in-part by the Heilbronn Institute for Mathematical Research.

\section{Width and rectangles}\label{sec:width}

Let \(N \cong \ZZ^d\) be a lattice, \(N^* \coloneqq \Hom(N,\ZZ) \cong \ZZ^d\) its dual lattice and \(N_{\RR} \coloneqq \RR \otimes_{\ZZ} N \cong \RR^d\) the real vector space containing \(N\).
For two pairs \((w_1,w_2)\) and \((w_1',w_2')\) we say that \((w_1,w_2) \leq_{lex} (w_1',w_2')\) either when \(w_1 < w_1'\) or when \(w_1=w_1'\) and \(w_2 \leq w_2'\).
This defines the lexicographic order on \(\ZZ^2\).
We can now define the first and second width.
\begin{definition}
For a lattice polytope \(P \subseteq N_\RR\) and a dual vector \(u \in N^*\) the \emph{width of \(P\) with respect to \(u\)} is
\[
\width_u(P) \coloneqq \max_{x \in P} \{u \cdot x\} - \min_{x \in P} \{u \cdot x\}.
\]
If \(d \geq 2\) then we can choose non-zero, linearly independent vectors \(u_1,u_2 \in N^*\) such that 
\[
(\width_{u_1}(P), \width_{u_2}(P))
\]
is minimal with respect to lexicographic order.
We call \(\width_{u_1}(P)\) the \emph{(first) width} of \(P\) written \(\width^1(P)\) and we call \(\width_{u_2}(P)\) the \emph{second width} of \(P\) written \(\width^2(P)\).
\end{definition}
It can be shown that these widths are the first two successive minima (in the sense of \cite{KannanLovasz}) of the polytope dual to \(P+(-P)\), where \(+\) denotes the Minkowski sum.

We need two more definitions for the following result.
A \emph{shear} is an affine map which preserves a line \(L\) in the plane.
It moves all other points in a fixed direction parallel to \(L\), by an amount proportional to the signed distance of the point from \(L\).
Such a map is said to be a \emph{shear about \(L\)}.
A lattice point is said to be \emph{primitive} if the greatest common divisor of all its coordinates is 1.

\begin{proposition}\label{prop:parallelogram}
Let \(Q\) be a lattice polygon with widths \(w\) and \(w'\) with respect to two linearly independent, primitive vectors in \(N^*\).
Then \(Q\) is equivalent to a subset of \([0,w] \times [0,w']\).
\end{proposition}
\begin{proof}
It suffices to prove this when \(w \leq w'\).
By a unimodular map we may assume that the width \(w\) is realised by \((1,0) \in N^*\).
Therefore, after a translation, we may assume that \(Q\) is a subset of a parallelogram \(P\) defined by
\[
P\coloneqq \{x \in \RR^2: (1,0)\cdot x \in [0,w], u\cdot x \in [d,d+w']\}
\]
for some integer \(d\) and some dual vector \(u\) linearly independent to \((1,0)\).
Say \(u=(u_x,u_y)\) then \(u_y \neq 0\) by linear independence.
After a shear about the \(y\)-axis, the vertices of \(P\) are
\[
\left(0,\frac{d}{u_y}\right),\left(0,\frac{d+w'}{u_y}\right),\left(w,\frac{d+w(ku_y-u_x)}{u_y}\right),\left(w,\frac{d+w'+w(ku_y-u_x)}{u_y}\right)
\]
for an integer \(k\).
We may assume that \(u_y>0\) otherwise we can replace \(u\) with \(-u\) and redefine \(d\) so that \(P\) is unchanged.
Therefore, we can choose \(k\) such that \(0 \leq ku_y-u_x < u_y\).
Now if \(Q'\) is the image of \(Q\) after this shear we have
\[
\width_{(0,1)}(Q') \leq \frac{w'+w(ku_y-u_x)}{u_y} \leq w'\frac{1+ku_y-u_x}{u_y} \leq w'.
\]
Possibly after a translation \(Q'\) is the desired subset of \([0,w] \times [0,w']\).
This relies on the fact that \(Q\) is a lattice polygon and therefore has integral width.
\end{proof}

Since width is an affine invariant \(\cT_{w_1,w_2}\) (Definition~\ref{def:main}), is well defined, but there is further motivation for this definition.
Given a lattice triangle \(T\), there is a natural lattice rectangle with dimensions \(\width_{(1,0)}(T) \times \width_{(0,1)}(T)\) which \(T\) is a subset of.
Another triangle equivalent to \(T\) may have a different associated rectangle so it is interesting to consider what the ``smallest'' rectangle containing \(T\) is up to equivalence.
One can formalise the idea of ``smallest'' using lexicographic order on rectangles or by requiring a triangle to be equivalent to no subset of a sub-rectangle.
Formally this is defined as follows.
\begin{definition}
Let \(\cT^{lex}_{w_1,w_2}\) be the set of lattice triangles \(T\) (up to affine equivalence) which are equivalent to a subset of \([0,w_1] \times [0,w_2]\) such that for all \((w_1',w_2') <_{lex} (w_1,w_2)\), \(T\) is not equivalent to a subset of \([0,w_1']\times [0,w_2']\).

Let \(\cT^{sub}_{w_1,w_2}\) be the set of lattice triangles \(T\) (up to affine equivalence) which are equivalent to a subset of \([0,w_1]\times[0,w_2]\) such that for all lattice points \((w_1',w_2') \in [0,w_1] \times [0,w_2]\) if \(T\) is equivalent to a subset of \([0,w_1'] \times [0,w_2']\) then \((w_1',w_2') = (w_1,w_2)\).
\end{definition}
For a lattice triangle \(T\) there is a unique pair of integers \((w_1,w_2)\) such that the equivalence class of \(T\) is in \(\cT^{lex}_{w_1,w_2}\).
However, while there is at least one pair \((w_1,w_2)\) such that the equivalence class of \(T\) is in \(\cT^{sub}_{w_1,w_2}\) it is not immediate that there is only one such pair.
In fact, if we allow \(w_2 < w_1\) there is more than one for infinitely many triangles.
The following proves that if we require \(w_1 \leq w_2\) this pair is unique and that both of these sets are equal to \(\cT_{w_1,w_2}\).

\begin{proposition}\label{prop:lex_eq_width}
The sets \(\cT_{w_1,w_2}\), \(\cT^{lex}_{w_1,w_2}\) and \(\cT^{sub}_{w_1,w_2}\) are equal.
\end{proposition}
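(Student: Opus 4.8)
The plan is to prove the circular chain of inclusions
\[
\cT_{w_1,w_2} \subseteq \cT^{lex}_{w_1,w_2} \subseteq \cT^{sub}_{w_1,w_2} \subseteq \cT_{w_1,w_2},
\]
from which equality of all three sets follows. The engine behind every inclusion is a single geometric fact that I would isolate first: \emph{if \(T\) is a lattice polygon with \(\width^1(T) \leq \width^2(T)\), then \(T\) is affine equivalent to a subset of \([0,\width^1(T)] \times [0,\width^2(T)]\)}. To prove it, pick a linearly independent pair \(u_1, u_2 \in (\ZZ^2)^*\) realising the lexicographically minimal pair \((\width^1(T), \width^2(T))\). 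A scaling argument shows \(u_1\) may be taken primitive: if \(u_1 = c u_1'\) with \(u_1'\) primitive and \(c \geq 2\), then \(\width_{u_1'}(T) = \width^1(T)/c\) is a strictly smaller non-negative integer (an integer because \(T\) is a lattice polygon), contradicting minimality of the pair \((u_1',u_2)\) unless \(\width^1(T) = 0\), in which case we may simply replace \(u_1\) by \(u_1'\) without changing any width. After a change of basis we may assume \(u_1 = (1,0)\), and after a translation \(T \subseteq [0,\width^1(T)] \times \RR\). If \(\width^1(T) = 0\) then \(T\) lies on the line \(x_1 = 0\) and the claim is immediate; otherwise \(T\) is contained in the parallelogram \(\{x : x_1 \in [0,\width^1(T)],\ u_2 \cdot x \in [r, r + \width^2(T)]\}\) with \(r \coloneqq \min_{x \in T} u_2 \cdot x \in \ZZ\), and Proposition~\ref{prop:parallelogram} (applied with \(w = \width^1(T)\), \(w' = \width^2(T)\), so that \(\max(w,w') = \width^2(T)\)) delivers the conclusion.

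Alongside this I would record the matching upper bound: \emph{if \(T \subseteq [0,w_1] \times [0,w_2]\) with \(w_1 \leq w_2\), then \(\width^1(T) \leq w_1\) and \(\width^2(T) \leq w_2\)}. The first inequality holds since \(\width^1(T) \leq \width_{(1,0)}(T) \leq w_1\). For the second, let \(u_1\) realise \(\width^1(T)\); since \((1,0)\) and \((0,1)\) are linearly independent, at least one of them is linearly independent of \(u_1\), and pairing it with \(u_1\) and invoking lexicographic minimality gives \(\width^2(T) \leq \width_{(1,0)}(T) \leq w_1 \leq w_2\) in the first case and \(\width^2(T) \leq \width_{(0,1)}(T) \leq w_2\) in the second.

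Granting these two facts the inclusions become short bookkeeping with the lexicographic order. For \(\cT_{w_1,w_2} \subseteq \cT^{lex}_{w_1,w_2}\): if \(\width^1(T) = w_1 \leq w_2 = \width^2(T)\) then the geometric fact places \(T\) inside \([0,w_1] \times [0,w_2]\), while if an equivalent copy \(T'\) lay inside \([0,w_1'] \times [0,w_2']\) with \((w_1',w_2') <_{lex} (w_1,w_2)\) then \((w_1,w_2) = (\width^1(T'), \width^2(T')) \leq_{lex} (\width_{(1,0)}(T'), \width_{(0,1)}(T')) \leq_{lex} (w_1',w_2')\), a contradiction. For \(\cT^{lex}_{w_1,w_2} \subseteq \cT^{sub}_{w_1,w_2}\): membership of \(T\) in \([0,w_1]\times[0,w_2]\) is already given, and any lattice point \((w_1',w_2') \in [0,w_1]\times[0,w_2]\) with \((w_1',w_2') \neq (w_1,w_2)\) automatically satisfies \((w_1',w_2') <_{lex} (w_1,w_2)\), so the lex condition rules out an equivalent copy of \(T\) inside \([0,w_1']\times[0,w_2']\) — precisely the defining property of \(\cT^{sub}\). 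Finally, for \(\cT^{sub}_{w_1,w_2} \subseteq \cT_{w_1,w_2}\): put \((a,b) \coloneqq (\width^1(T), \width^2(T))\); the upper bound gives \((a,b) \in [0,w_1]\times[0,w_2]\), the geometric fact places \(T\) inside \([0,a]\times[0,b]\), and the defining property of \(\cT^{sub}\) then forces \((a,b) = (w_1,w_2)\), i.e.\ \(T \in \cT_{w_1,w_2}\).

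I expect the main obstacle to be the geometric fact of the first paragraph: reducing to a primitive \(u_1\) so that Proposition~\ref{prop:parallelogram} applies, and handling the degenerate case \(\width^1(T) = 0\) separately. This is also where the standing hypothesis \(w_1 \leq w_2\) is unavoidable, since Proposition~\ref{prop:parallelogram} requires \(w, w' > 0\) and outputs the rectangle \([0,w] \times [0,\max(w,w')]\); without \(w_1 \leq w_2\) the three sets genuinely differ, as remarked before the statement.
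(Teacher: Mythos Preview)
Your proof is correct and rests on the same engine as the paper's, namely Proposition~\ref{prop:parallelogram}; the only difference is organisational. The paper proves \(\cT_{w_1,w_2}=\cT^{lex}_{w_1,w_2}\) and \(\cT_{w_1,w_2}=\cT^{sub}_{w_1,w_2}\) as two separate equalities, each time invoking Proposition~\ref{prop:parallelogram} in-line, whereas you extract the two facts ``\(T\) fits in its \(\width^1\times\width^2\) rectangle'' and ``containment in a \(w_1\times w_2\) rectangle bounds the widths'' once and then run the circular chain \(\cT\subseteq\cT^{lex}\subseteq\cT^{sub}\subseteq\cT\). Your step \(\cT^{lex}\subseteq\cT^{sub}\) via the observation that any lattice point of \([0,w_1]\times[0,w_2]\) other than \((w_1,w_2)\) is automatically lex-smaller is a tidy shortcut the paper does not take, but the substance of the argument is the same.
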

\begin{proof}
If the equivalence class of \(T\) is in \(\cT_{w_1,w_2}\) then there are linearly independent, primitive directions with respect to which it has widths \(w_1\) and \(w_2\).
Therefore, by Proposition~\ref{prop:parallelogram} \(T\) is equivalent to a subset of \([0,w_1] \times [0,w_2]\).
If \(T\) were equivalent to a subset of some \(w_1' \times w_2'\) rectangle where \((w_1',w_2') <_{lex} (w_1,w_2)\) this would contradict the widths of \(T\) so \(\cT_{w_1,w_2} \subseteq \cT_{w_1,w_2}^{lex}\).
If \(T\) were equivalent to a subset of some \(w_1'\times w_2'\) rectangle where \((w_1',w_2') \in [0,w_1] \times [0,w_2] \cap \ZZ^2\) then its widths would force \((w_1',w_2')=(w_1,w_2)\) so \(\cT_{w_1,w_2} \subseteq \cT_{w_1,w_2}^{sub}\).

If the equivalence class of \(T\) is in \(\cT^{lex}_{w_1,w_2}\) then its widths are lexicographically at most \(w_1\) and \(w_2\).
If we had \((\width^1(T), \width^2(T)) <_{lex} (w_1,w_2)\) then by Proposition~\ref{prop:parallelogram} \(T\) would be equivalent to a subset of \([0,\width^1(T)] \times [0,\width^2(T)]\) contradicting the definition of \(\cT_{w_1,w_2}^{lex}\).
Therefore, \(T\) has widths \(w_1\) and \(w_2\) and \(\cT_{w_1,w_2} = \cT_{w_1,w_2}^{lex}\).

If the equivalence class of \(T\) is in \(\cT_{w_1,w_2}^{sub}\) then its widths are lexicographically at most \(w_1\) and \(w_2\).
We may assume that \(T \subseteq [0,w_1] \times [0,w_2]\).
If it had width \(w_2'\) in some direction linearly independent to \((1,0)\) and \(w_2' < w_2\) then by Proposition~\ref{prop:parallelogram}, \(T\) is equivalent to a subset of \([0,w_1] \times [0,w_2']\) which contradicts the definition of \(\cT_{w_1,w_2}^{sub}\).
If it had width \(w_1' < w_1\) with respect to \((1,0)\) then, possibly after a translation, it would be a subset of \([0,w_1-1]\times [0,w_2]\) which also contradicts the definition of \(\cT_{w_1,w_2}^{sub}\).
Therefore, \(T\) has widths \(w_1\) and \(w_2\) and \(\cT_{w_1,w_2} = \cT_{w_1,w_2}^{sub}\).
\end{proof}

From now on we will freely use any of these definitions to describe the elements of \(\cT_{w_1,w_2}\).

\section{Proof of Theorem~\ref{thm:main}}
\label{sec:main_proof}

The following four results form the proof of Theorem~\ref{thm:main} by showing that the map taking a triangle to its affine equivalence class is a well-defined bijection from \(\cS_{w_1,w_2}\) to \(\cT_{w_1,w_2}\).
First we prove surjectivity.
\begin{proposition}\label{prop:surjectivity}
Let \(T\) be a lattice triangle with first and second width \(w_1\) and \(w_2\) respectively.
Then there exists a triangle \(T' \in \cS_{w_1,w_2}\) which is affine equivalent to \(T\).
\end{proposition}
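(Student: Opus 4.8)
The plan is to normalise $T$ inside its bounding rectangle by means of the residual symmetries of that rectangle, reading off from the width hypotheses which configurations can occur. The case $w_1=0$ is immediate: then $\width^1(T)=0$ forces $T$ into a line, so after a change of basis and a translation $T$ is the segment $\{0\}\times[0,w_2]$, whose three recorded vertices have heights $0$, $y_1$, $w_2$ with $0\le y_1\le w_2$; applying $y\mapsto w_2-y$ if necessary puts $y_1$ in $[0,w_2/2]$, giving an element of $\cS_{0,w_2}$. So assume $w_1>0$. By Proposition~\ref{prop:lex_eq_width} we may choose the representative with $T\subseteq[0,w_1]\times[0,w_2]$; then $\width_{(1,0)}(T)=w_1$, $\width_{(0,1)}(T)=w_2$, and for every primitive $u$ not parallel to $(1,0)$ we have $\width_u(T)\ge w_2$, since a smaller width would, via Proposition~\ref{prop:parallelogram}, fit $T$ into a strictly smaller rectangle. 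In particular each of the four sides of the rectangle carries a vertex of $T$; this inequality is the main tool.

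Now split on the multiset $M$ of first coordinates of the three vertices; it lies in $\{0,\dots,w_1\}$ and contains both $0$ and $w_1$, so after possibly applying $x\mapsto w_1-x$ it is either $\{0,0,w_1\}$ or $\{0,j,w_1\}$ with $1\le j\le w_1/2$. In the first case two vertices lie on $\{x=0\}$; feeding suitable directions $u=(a,1)$ into $\width_u(T)\ge w_2$ forces these vertices to be exactly $(0,0)$ and $(0,w_2)$, so $T=T(0,(w_1,c),(0,w_2))$ for some $c\in\{0,\dots,w_2\}$. The symmetries preserving this shape are the shears $(x,y)\mapsto(x,y+kx)$, which replace $c$ by $c+kw_1$, and $(x,y)\mapsto(x,w_2-y)$, which replaces $c$ by $w_2-c$; together these move $c$ to $\min\bigl(c\bmod w_1,\,(w_2-c)\bmod w_1\bigr)$, which is precisely the $y_1$ satisfying $0\le y_1\le(w_2-y_1)\bmod w_1$. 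This is case~\ref{item:long_edge}.

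In the second case, write the vertices as $P=(0,p)$, $Q=(w_1,q)$, $S=(j,t)$ with $0<j<w_1$. Testing $u=(-1,1)$ against $\width_u(T)\ge w_2$ first rules out $0<t<w_2$ (in which case $P$ and $Q$ would have to carry the heights $0$ and $w_2$), so after $y\mapsto w_2-y$ we may take $t=w_2$; a $y=0$ vertex is then $P$ or $Q$, and testing $u=(-1,1)$ and $u=(1,1)$ yields $q\le w_1-j$ and, in the subcase where the $y=0$ vertex is $Q$ with $p>0$, also $p\le j$. Normalising $j$ by the reflection $x\mapsto w_1-x$, which swaps $P$ and $Q$ and sends $j$ to $w_1-j$ (and which, should it push $T$ off the rectangle, is repaired by a shear $(x,y)\mapsto(x,y\pm x)$ and a translation, there being room because $w_2\ge w_1$), lands $T$ in case~\ref{item:short_edge_1} when $j\le w_1/2$, and in case~\ref{item:short_edge_2} when instead the reflected form is the admissible one; the inequalities force this latter alternative only when $w_1<w_2$ and $j>1$, and when $w_1=w_2$ the extra constraint $y_1\ge x_2$ of case~\ref{item:short_edge_1} is arranged using the transpose $(x,y)\mapsto(y,x)$. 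Degenerate triangles are special instances of these forms.

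The step I expect to be hardest is the bookkeeping in this second case: verifying that, for each position of the middle vertex and each choice of which vertices realise the vertical and horizontal extremes, the right auxiliary directions $u$ give exactly the defining inequalities of $\cS_{w_1,w_2}$, and that every fundamental-domain boundary (the equalities $w_1=w_2$, $j=w_1/2$, $p=j$, the $\bmod\;w_1$ condition of case~\ref{item:long_edge}, the threshold $x_2>1$ of case~\ref{item:short_edge_2}) is assigned to exactly one normal form, so that the reductions land in $\cS_{w_1,w_2}$ and nowhere outside it.
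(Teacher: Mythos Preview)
Your approach is essentially the paper's: place $T$ in its minimal rectangle via Proposition~\ref{prop:lex_eq_width}, normalise the multiset of $x$-coordinates, and case-split on whether the middle $x$-coordinate is $0$ or not. Where the paper argues ``after this shear $T$ fits in a strictly smaller rectangle, contradiction'', you instead test specific directions $u=(\pm1,1)$ against $\width_u(T)\ge w_2$; these are the same argument phrased two ways, since a shear by $(x,y)\mapsto(x,y\mp x)$ converts $\width_{(\pm1,1)}$ into $\width_{(0,1)}$.

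The boundary bookkeeping you flag as ``hardest'' does need more than you have written. Two points in particular:
\begin{itemize}
\item When $q=0$, $0<p$ and $p=j$ (this includes the $j=1$ edge), neither your reflection nor case~\eqref{item:short_edge_2} applies. The move that works is the shear $(x,y)\mapsto(x,y+x)$ followed by translation by $(0,-j)$, giving $T\bigl((0,0),(w_1,w_1-j),(j,w_2)\bigr)$, which sits at the upper boundary $y_1=w_1-x_2$ of case~\eqref{item:short_edge_1}.
\item When $q=0$, $0<p<j$ and $w_1=w_2$, the paper disposes of this by having chosen $x_2$ minimal over \emph{all} width-$w_1$ directions (so the $(0,1)$-projection $\{0,p,w_1\}$ with $p<j$ would contradict minimality). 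You have not set up that minimality; with your setup you would transpose, then re-normalise, landing in case~\eqref{item:short_edge_1} with the smaller middle coordinate $p$.
\end{itemize}

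Finally, the sentence beginning ``Normalising $j$ by the reflection $x\mapsto w_1-x$'' is confusing, since you already fixed $j\le w_1/2$; what you actually mean is that this reflection swaps the roles of $P$ and $Q$, and it yields a case-\eqref{item:short_edge_1} triangle only when $j=w_1/2$. For $j<w_1/2$ with $q=0$ and $0<p<j$ you are genuinely in case~\eqref{item:short_edge_2} (and must have $w_1<w_2$ by the previous bullet).
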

\begin{proof}

By Proposition~\ref{prop:lex_eq_width} we may assume that \(T\) is a subset of \([0,w_1] \times [0,w_2]\).
We may further assume that there is a vertex of \(T\) contained in each edge of the rectangle otherwise it would be a subset of a smaller rectangle.
Consider the three-point set we obtain by projecting the vertices of \(T\) onto the first coordinate.
This set is \(\{0,x_2,w_1\}\) for some integer \(x_2 \in [0,w_1]\).
By a reflection in the line \(x=w_1/2\) we may assume that \(x_2 \leq w_1/2\).

Now, for some integers \(y_0,y_1,y_2 \in [0,w_2]\)
\[
T = \conv((0,y_0),(w_1,y_1),(x_2,y_2)).
\]
Since \(T\) has vertices in each edge of the rectangle one of these \(y\)-coordinates must be \(0\) and one must be \(w_2\).
Suppose, towards a contradiction, that \(0 < y_2 < w_2\).
Then, possibly after a reflection in the line \(y=w_2/2\), we can assume that \(y_0=0\) and \(y_1=w_2\) so the affine map \((x,y) \mapsto (x,y-x)\) takes \(T\) to 
\[
\conv((0,0),(w_1,w_2-w_1),(x_2,y_2-x_2)).
\]
However, this is a subset of a smaller rectangle which contradicts the widths of \(T\), so we may assume that \(y_2=0\) or \(w_2\).
By a reflection in the line \(y=w_2/2\) we assume that \(y_2=w_2\).

Now, for integers \(y_0, y_1 \in [0,w_2]\) one of which is zero, we have
\[
T = \conv((0,y_0),(w_1,y_1),(x_2,w_2))
\]
and we need to consider three different cases corresponding to the three different types of triangle in \(\cS_{w_1,w_2}\).
We will prove each of the following facts:
\begin{enumerate}
    \item If \(x_2=0\) then we may assume \(T\) is of type (A),
    \item If \(x_2>0\) and \(y_0=0\) then we may assume \(T\) is of type (B)
    \item If \(x_2>0\) and \(y_0>0\) then we may assume \(T\) is of type (C).
\end{enumerate}

(1) Suppose \(x_2=0\) and consider the image of the vertices of \(T\) under \((1,1)\):
\[
\{y_0,w_1+y_1,w_2\}.
\]
By the second width of \(T\), this must not be a subset of \([1,w_2]\).
We know that \(y_1=0\) or \(y_0=0\).
If \(y_1=0\) then \(w_1+y_1\) and \(w_2\) are both contained in \([1,w_2]\) so we must have \(y_0=0\) too so we always have \(y_0=0\).
By a shear about the \(y\)-axis we may assume that \(y_1 < w_1\).
Define \(y_1'\) to be \((w_2-y_1 \mod w_1)\).
Notice that, since \(0 \leq y_1 < w_1\), we have \(y_1 = (w_2-y_1' \mod w_1)\).
If \(y_1 \leq y_1'\) then \(T\) is of type (A).
Otherwise, let \(k\) be the integer such that \(y_1' = w_2-y_1 + kw_1\) then the map \((x,y)\mapsto (x,w_2-y+kx)\) takes \(T\) to the following triangle of the form (A):
\[
T' = \conv((0,0),(w_1,y_1'),(0,w_2)).
\]

(2) If \(x_2>0\) and \(y_0=0\) then consider the image of \(T\) under (-1,1):
\[
\{0,y_1-w_1,w_2-x_2\}.
\]
Due to the second width of \(T\) this must not be a subset of a line segment of length less than \(w_2\) which means that \((w_2-x_2) - (y_1-w_1)\) must be at least \(w_2\).
We can rearrange this to show that \(0 \leq y_1 \leq w_1-x_2\).
If additionally \(w_1=w_2\) then we have the following two triangles which are equivalent, under the map exchanging \(x\)- and \(y\)-coordinates.
\[
\conv((0,0),(w_1,y_1),(x_2,w_1)), \quad \conv((0,0),(w_1,x_2),(y_1,w_1))
\]
We choose the triangle with the smaller \(x\)-coordinates and so may assume that \(y_1 \geq x_2\).

(3) If \(x_2>0\) and \(y_0>0\) then we must have \(y_1=0\).
If \(x_2=w_1/2\) then a reflection takes us to the previous case so we may assume \(x_2 < w_1/2\).
Consider the image of \(T\) under \((1,1)\):
\[
\{y_0,w_1,w_2+x_2\}.
\]
We know that \(y_0\) and \(w_1\) are both less than \(w_2+x_2\) so to prevent this fitting in a line segment of length less than \(w_2\) we must have either \(y_0\) or \(w_1\) must less than or equal to \(x_2\).
It is fixed that \(x_2 < w_1\) so we must have \(y_0 \leq x_2\)
If \(y_0=x_2\) then, under the map \((x,y) \mapsto (x,y-x_2 +x)\), \(T\) is equivalent to \(\conv((0,0), (w_1,w_1-x_2), (x_2,w_2))\) which is included in case (2) so we may assume that \(y_0<x_2\).
If \(w_1=w_2\) then the map \((x,y) \mapsto (y,w_1-x)\) takes \(T\) to \(\conv((0,0),(y_0,w_1),(w_1,w_1-x_2))\) which is included in case (2) so we may assume case (3) only occurs when \(w_1<w_2\).
This shows that \(T\) is equivalent to a triangle in \(\cS_{w_1,w_2}\).
\end{proof}

We now show that the widths of triangles in \(\cS_{w_1,w_2}\) are \(w_1\) and \(w_2\) which we will use to show that the map taking a triangle to its equivalence class is a map from \(\cS_{w_1,w_2}\) to \(\cT_{w_1,w_2}\).

\begin{proposition}\label{prop:well-defined}
For \(T \in \cS_{w_1,w_2}\), the first and second widths of \(T\) are \(w_1\) and \(w_2\) respectively.
\end{proposition}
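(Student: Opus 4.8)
The plan is to verify the two inequalities $\width^1(T)\ge w_1$ and $\width^2(T)\ge w_2$ directly from the coordinates, since the reverse inequalities are immediate: in each of the four cases defining $\cS_{w_1,w_2}$ the three vertices of $T$ include one with $x$-coordinate $0$ and one with $x$-coordinate $w_1$, and one with $y$-coordinate $0$ and one with $y$-coordinate $w_2$, so $\width_{(1,0)}(T)=w_1$ and $\width_{(0,1)}(T)=w_2$; the linearly independent pair $((1,0),(0,1))$ then witnesses $(\width^1(T),\width^2(T))\le_{lex}(w_1,w_2)$.

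I would reduce the remaining inequalities to one claim: every nonzero $u=(a,b)\in(\ZZ^2)^*$ satisfies $\width_u(T)\ge w_1$, and satisfies $\width_u(T)\ge w_2$ whenever $b\ne 0$. Granting this, $\width^1(T)=w_1$ follows at once. For the second width, if $w_1=w_2$ there is nothing more to show since $\width^2(T)\ge\width^1(T)=w_1=w_2$; and if $w_1<w_2$, then any $u_1$ attaining $\width_{u_1}(T)=w_1$ must have $b=0$ by the claim, so in a lexicographically minimal pair $(u_1,u_2)$ the vector $u_2$ has nonzero second coordinate and hence $\width_{u_2}(T)\ge w_2$, giving $\width^2(T)\ge w_2$ and therefore equality.

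To prove the claim I would argue case by case on $u$. If $b=0$ then $\width_u(T)=|a|(\max_i x_i-\min_i x_i)=|a|w_1\ge w_1$, using the two vertices at $x=0$ and $x=w_1$. If $b\ne 0$, replace $u$ by $-u$ so that $b\ge 1$; it then suffices, in each case, to exhibit two vertices $v,v'$ of $T$ with $u\cdot v-u\cdot v'\ge w_2$. For case~\ref{item:long_edge} (and for $w_1=0$) the pair $v=(0,w_2)$, $v'=0$ gives $bw_2\ge w_2$. For case~\ref{item:short_edge_1}, take $v=(x_2,w_2)$; if $a\ge 0$ take $v'=0$, so $u\cdot v-u\cdot v'=ax_2+bw_2\ge w_2$, and if $a\le -1$ take $v'=(w_1,y_1)$, so $u\cdot v-u\cdot v'=|a|(w_1-x_2)+b(w_2-y_1)\ge(w_1-x_2)+(w_2-y_1)\ge w_2$, the last inequality using $0<x_2\le w_1/2$ and the constraint $y_1\le w_1-x_2$. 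For case~\ref{item:short_edge_2}, take $v=(x_2,w_2)$; if $a\le 0$ take $v'=(w_1,0)$, so $u\cdot v-u\cdot v'=|a|(w_1-x_2)+bw_2\ge w_2$, and if $a\ge 1$ take $v'=(0,y_0)$, so $u\cdot v-u\cdot v'=ax_2+b(w_2-y_0)\ge x_2+(w_2-y_0)>w_2$, using $y_0<x_2$.

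The only real difficulty is bookkeeping: one must check that each chosen pair of vertices really does realise a difference of at least $w_2$, and this is exactly where the inequalities built into $\cS_{w_1,w_2}$ — principally $y_1\le w_1-x_2$, $y_0<x_2$, and $0<x_2\le w_1/2$ — are consumed, leaving no slack. I do not expect any conceptually hard step.
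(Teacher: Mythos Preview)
Your proof is correct. The case analysis and the way the defining inequalities of $\cS_{w_1,w_2}$ are consumed match the paper's almost line for line; the one genuine difference is in the reduction step. The paper invokes (the proof of) Proposition~\ref{prop:parallelogram} to argue that it suffices to check $\width_{(0,1)}(\varphi(T))\ge w_2$ for all shears $\varphi$ about the $y$-axis, i.e.\ to bound $\width_{(k,1)}(T)$ for integer $k$ only. You instead bound $\width_{(a,b)}(T)$ for every $(a,b)$ with $b\ne 0$ directly, which makes the argument self-contained and bypasses Proposition~\ref{prop:parallelogram} and the equivalence $\cT_{w_1,w_2}=\cT^{lex}_{w_1,w_2}$ entirely. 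The cost is negligible (your vertex-pair estimates use only $|a|\ge 1$ and $b\ge 1$, so the extra generality is free), and the payoff is a cleaner logical structure: you verify the widths straight from the definition rather than via the rectangle characterisation.
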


\begin{proof}
The triangles in \(\cS_{w_1,w_2}\) fall into one of the following types regardless of whether \(w_1=w_2\) or \(w_1<w_2\).
Therefore, it suffices to prove the result for each of the following triangles for all positive integers \(w_1 \leq w_2\).
\begin{enumerate}
    \item[(A)] \(T=\conv((0,0),(w_1,y_1),(0,w_2))\) where \(0 \leq y_1 \leq (w_2-y_1 \mod w_1)\)
    \item[(B)] \(T=\conv((0,0),(w_1,y_1),(x_2,w_2))\) where \(0 < x_2 \leq \frac{w_1}{2}\) and \(0 \leq y_1 \leq w_1-x_2\)
    \item[(C)] \(T=\conv((0,y_0),(w_1,0),(x_2,w_2))\) where \(1 < x_2 < \frac{w_1}{2}\), \(0 < y_0 < x_2\) and \(w_1<w_2\).
\end{enumerate}
These triangles all have widths \(w_1\) and \(w_2\) with respect to \((1,0)\) and \((0,1)\) respectively so it suffices to show that they have width at least \(w_2\) in all directions linearly independent to \((1,0)\)
By  the proof of Proposition~\ref{prop:parallelogram}, it suffices to show that under any shear \(\varphi\) about the \(y\)-axis \(\width_{(0,1)}(\varphi(T)) \geq w_2\). We do this in each of the three cases.

(A) This case it is immediate since all shears about the \(y\)-axis preserve \((0,0)\) and \((0,w_2)\).

(B) A shear about the \(y\)-axis can take this case to
\[
\conv((0,0),(w_1,y_1+kw_1),(x_2,w_2+kx_2))
\]
for integers \(k\).
Suppose for contradiction that this is has width less than \(w_2\) with respect to \((0,1)\).
Then we must have \(w_2+kx_2<w_2\) so \(k\) is negative.
We also need \(w_2+kx_2-y_1-kw_1 < w_2\) and so \(\frac{-y_1}{w_1-x_2} < k\).
However, we know that \(w_1-x_2 \geq y_1\) so no such integer \(k\) exists.

(C) A shear about the \(y\)-axis can take this case to
\[
\conv((0,y_0),(w_1,kw_1),(x_2,w_2+kx_2))
\]
for integers \(k\).
Suppose for contradiction that this has width less than \(w_2\) with respect to \((0,1)\).
Then we must have \(w_2+kx_2-kw_1<w_2\) so \(k\) is positive.
We also need \(w_2+kx_2-y_0 < w_2\) and so \(k < \frac{y_0}{x_2}\) but we know that \(y_0<x_2\) so no such integer \(k\) exists.
\end{proof}

The following lemma will be used in the proof of distinctness of the triangles in \(\cS_{w_1,w_2}\).

\begin{lemma}\label{lemma:x_2_min}
Let \(T = \conv((0,y_0),(w_1,y_1),(x_2,w_2))\) be a triangle in \(\cS_{w_1,w_2}\).
Let \(u\) be a dual vector such that the image of the vertices of \(T\) under \(u\) is \(\{0,x_2',w_1\}\) for some integer \(x_2' \in [0,w_1]\) then \(x_2' \geq x_2\).
In other words, \(x_2\) is minimal.
\end{lemma}
\begin{proof}
When \(x_2=0\) this is immediate.
When \(w_1<w_2\) it follows from the facts that \(\width_u(T)=w_1\) for a unique (up to sign) choice of \(u\) and that \(x_2 \leq w_1/2\).
Therefore, we need only prove this for triangles of the form (B) when \(w_1=w_2\).
That is, 
\[
T = \conv((0,0),(w_1,y_1),(x_2,w_1)) \quad \text{with} \quad 0 < x_2 \leq w_1/2 \quad \text{and} \quad x_2 \leq y_1 \leq w_1-x_2.
\]

Let \(u=(u_x,u_y)\) be a dual vector linearly independent to \((1,0)\), we may assume that \(u_y \geq 1\). The image of the vertices of \(T\) under \(u\) is
\[
\{0, u_xw_1+u_yy_1, u_xx_2+u_yw_1\}.
\]
Pick \(u\) such that this is equivalent to \(\{0,x_2',w_1\}\) for some \(x_2' \in [0,w_1]\).
This places strong restrictions on \(u\).
The difference between each pair of elements in this set must be at most \(w_1\).
In particular \(u_xx_2+u_yw_1\leq w_1\) and \(u_xw_1+u_yy_1 \geq -w_1\).
These can be rearranged into
\begin{align*}
-(u_yy_1+w_1)/w_1 \leq u_x \leq w_1(1-u_y)/x_2
\end{align*}
By the conditions on \(T\) we know that \(w_1/x_2 \geq 2\) and \(y_1/w_1 < 1\) so we can further show that
\begin{align}\label{eq:u_x_bounds}
-1 -u_y < u_x \leq 2(1-u_y).
\end{align}
In particular \(u_y < 3\).
Substituting \(u_y=1\) and \(u_y=2\) into \eqref{eq:u_x_bounds} and considering the possible integers \(u_x\) in each case shows that \(u\) is equal to \((0,1)\), \((-1,1)\) or \((-2,2)\).
The image of the vertices of \(T\) under each of these are 
\[
\{0,y_1,w_1\},\quad \{0,y_1-w_1,w_1-x_2\} \quad \text{and} \quad \{0,2y_1-2w_1,2w_1-2x_2\}
\]
respectively.
The properties of the coordinates of \(T\) allow us to order the elements of each of these sets: \(0 < y_1 < w_1\) and \(y_1-w_1 < 0 < w_1-x_2\).
This allows us to identify which point in each set is sent to \(x_2'\) under the equivalence with \(\{0,x_2',w_1\}\).
We see that \(x_2' = y_1\), \(w_1-y_1\), \(w_1-x_2\), \(2(w_1-x_2)\) or \(2(w_1-y_1)\).
For any of these \(x_2' \geq x_2\) as desired.
\end{proof}

Now we can prove affine distinctness of the triangles in \(\cS_{w_1,w_2}\).

\begin{proposition}\label{prop:injectivity}
The triangles in \(\cS_{w_1,w_2}\) are all distinct under affine unimodular maps.
\end{proposition}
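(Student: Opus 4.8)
The plan is to fix the pair \((w_1,w_2)\) and show that two affine equivalent members \(T,T'\in\cS_{w_1,w_2}\) must in fact be equal. If \(w_1=0\) this is immediate: the triangles then lie on a line, so the multiset of second coordinates of their three vertices is an affine invariant up to an integral translation and a sign, and for \(T\in\cS_{0,w_2}\) it is \(\{0,y_1,w_2\}\) with \(0\le y_1\le w_2/2\), which recovers \(y_1\). So assume \(w_1>0\). The engine of the argument is the set of \emph{narrow directions} of a triangle \(T\) with \(\width^1(T)=w_1\): the primitive \(u\in(\ZZ^2)^*\), taken up to sign, with \(\width_u(T)=w_1\). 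This set is plainly an affine invariant, and if \(w_1<w_2\) it is a single line, since two linearly independent narrow directions would exhibit a pair of widths \((w_1,w_1)\) and so force \(\width^2(T)\le w_1\). For \(T\in\cS_{w_1,w_2}\) one checks directly that \(\width_{(1,0)}(T)=w_1\), so when \(w_1<w_2\) the narrow direction is \((1,0)\).

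Suppose first \(w_1<w_2\). Since \(T\) and \(T'\) share the unique narrow direction \((1,0)\), the linear part of any \(\varphi\) with \(\varphi(T)=T'\) must preserve \(\RR\cdot(1,0)\subseteq(\ZZ^2)^*\), forcing \(\varphi(x,y)=(\varepsilon x+p,\ cx+\delta y+q)\) with \(\varepsilon,\delta\in\{\pm 1\}\) and \(c,p,q\in\ZZ\). Such a \(\varphi\) maps the multiset \(\{0,x_2,w_1\}\) of vertex \(x\)-coordinates of \(T\) to that of \(T'\), which can only happen as the identity on \(x\)-coordinates (if \(\varepsilon=1\), whence \(p=0\)) or the reflection \(x\mapsto w_1-x\) (if \(\varepsilon=-1\), whence \(p=w_1\)), the reflection being possible only when \(x_2\in\{0,w_1/2\}\). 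Hence \(x_2\) is an affine invariant, which already separates family~\eqref{item:long_edge} (\(x_2=0\)) from families~\eqref{item:short_edge_1} and~\eqref{item:short_edge_2} (\(x_2>0\)) and reduces the problem to a common value of \(x_2\). In each remaining case \(\varphi\) fixes or reflects the \(x\)-coordinates, leaving only a shear \(y\mapsto cx+\delta y+q\) (possibly after the reflection); one then tracks which vertex of \(T\) maps to which vertex of \(T'\) — the unique vertex on \(y=w_2\) to its counterpart, the vertex or edge on \(y=0\) likewise — and invokes the defining inequalities on \(y_0,y_1\) (for instance \(0\le y_1\le w_1-x_2\) in family~\eqref{item:short_edge_1}, \(0<y_0<x_2\) in family~\eqref{item:short_edge_2}, the modular bound on \(y_1\) in family~\eqref{item:long_edge}) to bound and then eliminate \(c\), forcing \(\delta=1\), \(q=0\), and hence \(T'=T\). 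The same estimates show families~\eqref{item:short_edge_1} and~\eqref{item:short_edge_2} are never equivalent at a common \(x_2\), and settle the boundary value \(x_2=w_1/2\), where the reflection is available but is killed by \(y_1\le w_1/2\) together with \(w_1<w_2\).

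Now suppose \(w_1=w_2\). Here the narrow directions need no longer form a single line — \((1,0)\) and \((0,1)\) are always narrow, and \((1,1)\) or \((1,-1)\) may be too — so \(\varphi\) can no longer be rigidified at a stroke. I would refine the invariant by recording, for each narrow direction, whether the two extremal faces of \(T\) along it are both vertices or whether one is an edge. A short case check shows: the dilate \(T(0,(w_1,0),(0,w_1))\) of the standard simplex has three narrow directions, each with an edge face; every other member of family~\eqref{item:long_edge} has exactly one narrow direction with an edge face, namely \((1,0)\); and every member of family~\eqref{item:short_edge_1}, for which \(1\le x_2\le y_1\le w_1-x_2\le w_1-1\), has only vertex faces. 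This isolates \(T(0,(w_1,0),(0,w_1))\) (hence alone in its class within \(\cS_{w_1,w_1}\)), separates families~\eqref{item:long_edge} and~\eqref{item:short_edge_1}, and in family~\eqref{item:long_edge} reinstates the requirement that \(\varphi\) fix \((1,0)\) up to sign, so the argument of the previous paragraph applies. In family~\eqref{item:short_edge_1} the map \(\varphi\) may permute the narrow directions; precomposing with a suitable unimodular map that permutes them — the coordinate exchange \(\sigma\colon(x,y)\mapsto(y,x)\) for the transposition of \((1,0)\) and \((0,1)\) — reduces to the case where \(\varphi\) fixes \((1,0)\) up to sign. Then \(\varphi\) again has the form \((\varepsilon x+p,\ cx+\delta y+q)\), and comparing \(x\)-projections and tracking vertices yields the parameters of \(T'\) in terms of those of \(T\); imposing that \(T'\), so described, genuinely satisfies its defining inequalities \(x_2\le w_1/2\) and \(x_2\le y_1\le w_1-x_2\) then collapses these relations to \(T'=T\).

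I expect this collapse — together with the analogous handling of \(x_2=w_1/2\) — to be the main obstacle: because the narrow direction is not unique when \(w_1=w_2\) one cannot simply read off \(x_2\) as an invariant, and must instead rule out the identifications the coordinate swap might create by exploiting the asymmetry that the defining inequalities of \(\cS_{w_1,w_1}\) build in. In every case, though, the question comes down to the elementary one of when a shear of bounded integer coefficient can carry one of our normal forms onto another; the tight inequalities bound and then kill that coefficient, so the argument should close, at the price of a somewhat lengthy enumeration of subcases.
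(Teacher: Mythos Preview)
Your route is sound and genuinely different from the paper's. You constrain the linear part of an equivalence \(\varphi\) via the affine invariant ``set of narrow directions, refined by whether an extremal face is an edge'', then pin down the remaining shear by vertex-tracking against the defining inequalities. The paper instead proves directly that \(x_2\) --- the smallest nonzero entry in the \(x\)-projection of the vertex set --- is the minimum of this quantity over \emph{all} directions realising width \(w_1\); for \(w_1<w_2\) this is immediate (as you note), and for \(w_1=w_2\) it is a short explicit computation splitting into three cases according to which vertex is ``middle''. With \(x_2=x_2'\) in hand the paper then equates normalised volumes (\(w_1w_2-y_1x_2=w_1w_2-y_1'x_2'\) gives \(y_1=y_1'\) in one line, and similarly for family~\eqref{item:short_edge_2}), and separates families~\eqref{item:short_edge_1} and~\eqref{item:short_edge_2} by a single \(3\times 3\) matrix check. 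Your edge-face invariant is an elegant replacement for the paper's handling of family~\eqref{item:long_edge} versus~\eqref{item:short_edge_1} at \(w_1=w_2\); conversely, the paper's minimality-of-\(x_2\) lemma and its volume step avoid the subcase enumeration you anticipate.

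The one spot where your sketch is genuinely thin is family~\eqref{item:short_edge_1} with \(w_1=w_2\): the narrow directions there are \(\{(1,0),(0,1)\}\) when \(y_1<w_1-x_2\) but \(\{(1,0),(0,1),(1,-1)\}\) when \(y_1=w_1-x_2\), so ``precompose with \(\sigma\)'' handles only one of the six possible permutations in the latter sub-case. This is easily repaired within your own framework --- the number of narrow directions is itself an invariant, and in the three-direction sub-case \(T\) is determined by \(x_2\) alone, with volume \(w_1^2-x_2(w_1-x_2)\) strictly monotone on \(1\le x_2\le w_1/2\) --- but you should say so. More generally, borrowing the paper's volume argument once you have a common \(x_2\) would let you dispense with most of the vertex-tracking.
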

\begin{proof}
Let \(T\) and \(T'\) be equivalent triangles in \(\cS_{w_1,w_2}\).
Let the variables associated to \(T\) and \(T'\) be denoted by \(y_0,y_1,x_2\) and \(y_0',y_1',x_2'\) respectively.
By Lemma~\ref{lemma:x_2_min} \(x_2=x_2'\).
Therefore, both are of the form (A) or neither are.
We will show in each of the following four facts.
\begin{enumerate}
    \item If \(T\) and \(T'\) are both of type (A) then \(T=T'\)
    \item If \(T\) and \(T'\) are both of type (B) then \(T=T'\)
    \item If \(T\) and \(T'\) are both of type (C) then \(T=T'\)
    \item If \(T\) is of type (B) and \(T'\) is of type (C) then we have a contradiction.
\end{enumerate}

(1) Either \(y_1=y_1'=0\) and \(w_1=w_2\) or they each have a unique edge of lattice length \(w_2\).
Therefore, either \(T=T'\) or the affine map taking \(T\) to \(T'\) preserves the line segment from \((0,0)\) to \((0,w_2)\).
The reduces us to maps of the form 
\[
(x,y) \mapsto (x,y+kx) \quad \text{and} \quad(x,y) \mapsto (x,w_2-y+kx).
\]
for integers \(k\).
Since \(0 \leq y_1,y_1' < w_1\) if a map of the first from takes \(T\) to \(T'\) then \(y_1=y_1'\) and \(T=T'\).
If a map of the second form takes \(T\) to \(T'\) then \(y_1 \leq (w_2-y_1 \mod w_1) = y_1'\) and symmetrically \(y_1' \leq y_1\) so again \(T=T'\).

(2) The normalised volumes of \(T\) and \(T'\) are \(w_1w_2-y_1x_2\) and \(w_1w_2-y_1'x_2\) which must be equal so \(y_1=y_1'\) and \(T=T'\).

(3) The normalised volumes of \(T\) and \(T'\) are \(w_1w_2-w_1y_0+y_0x_2\) and \(w_1w_2-w_1y_0'+y_0'x_2\) which must be equal so \(y_0=y_0'\) and \(T=T'\).

(4) Since \(T'\) is of type (C) we know that \(0 < x_2 < w_1/2\) and \(w_1<w_2\).
By \(w_1<w_2\) we see that the dual vector \(u\) such that \(\width_u(T')=w_1\) is unique.
We can use it to distinguish the three vertices of \(T\) and \(T'\) by their images under \(u\).
In this way we see there is only one order in which to map the vertices of \(T\) to the vertices of \(T'\) that is \((0,0)\), \((w_1,y_1)\) and \((x_2,w_2)\) map to \((0,y_0')\), \((w_1,0)\) and \((x_2,w_2)\) respectively.
Thus the following matrix must be unimodular
\[
\begin{pmatrix}
w_1 & x_2\\ -y_0' & w_2-y_0'
\end{pmatrix}
\begin{pmatrix}
w_1 & x_2\\ y_1 & w_2
\end{pmatrix}^{-1}
=
\begin{pmatrix}
1 & 0 \\ \frac{y_1y_0'-y_0'w_2-y_1w_2}{w_1w_2-x_2y_1} & 1 
\end{pmatrix}.
\]
Since \(w_2 > y_1\) we must have \(y_1y_0'-y_0'w_2-y_1w_2 < 0\) so for the matrix to have integral entries we must have \(y_1w_2+y_0'w_2-y_1y_0' \geq w_1w_2-x_2y_1\).
Since \(y_1 \leq w_1-x_2\) the right hand side is at least \(w_2(x_2+y_1) - x_2y_1\).
Cancelling terms and dividing by \(w_2-y_1\) gives \(y_0' \geq x_2\) which is the desired contradiction.
\end{proof}

We bring together the results of this section to classify and count triangles by their widths:

\begin{proof}[Proof of Theorem~\ref{thm:main}]
Proposition~\ref{prop:well-defined} shows that the map taking a triangle to its affine equivalence class is a map from \(\cS_{w_1,w_2}\) to \(\cT_{w_1,w_2}\).
By Propositions~\ref{prop:surjectivity} and \ref{prop:injectivity} this map is bijective.

To calculate the cardinality of \(\cT_{w_1,w_2}\) we need only count the triangles in \(\cS_{w_1,w_2}\).
For triangles of type (A) we need to compute the number of integers \(y_1 \in [0,w_1)\) such that \(y_1 \leq (w_2-y_1 \mod w_1)\).
Pick \(q,r \in \ZZ\) with \(0\leq r<w_1\) such that \(w_2 = qw_1+r\).
Then \(y_1 \leq (w_2-y_1 \mod w_1)\) if and only if \(0 \leq y_1 \leq \frac{r}{2}\) or \(r<y_1\leq \frac{r+w_1}{2}\) which can be seen by considering the plot of \((y_1,y_1)\) and \((y_1,w_2-y_1 \mod w_1)\) for \(y_1 \in [0,w_1)\).
For any integer \(a\) the number of points in \([0,\frac{a}{2}]\cap \ZZ\) is \(\lceil \frac{a+1}{2} \rceil\) so the number of \(y_1\) satisfying one of the above is
\[
\left\lceil\frac{r+1}{2}\right\rceil + \left\lceil\frac{w_1-r+1}{2}\right\rceil - 1.
\]
If we substitute in \(r = w_2-qw_1\) and consider cases for \(w_1, w_2\) and \(q\) odd and even it can be shown that this is \(\lceil \frac{w_1}{2}\rceil\) when \(w_2\) is odd and \(\lceil \frac{w_1+1}{2} \rceil\) when \(w_2\) is even.

For triangles of type (B) and (C) we need only calculate 
\[
\sum_{x_2=1}^{\lfloor w_1/2\rfloor} (w_1-x_2+1) + \sum_{x_2=2}^{\lfloor (w_1-1)/2 \rfloor} (x_2-1) = \begin{cases}
    \frac{w_1^2}{2} - \frac{w_1}{2}+1 &\text{if \(w_1\) even}\\
    \frac{w_1^2}{2}-\frac{w_1}{2} &\text{if \(w_1\) odd}\\
    \end{cases}
\]
when \(w_1<w_2\) and
\[
\sum_{x_2=1}^{\lfloor w_1/2 \rfloor} (w_1-2x_2+1)=\begin{cases}
    \frac{w_1^2}{4} &\text{if \(w_1\) even}\\
    \frac{w_1^2}{4} -\frac{1}{4} &\text{if \(w_1\) odd}\\
    \end{cases}
\]
when \(w_1=w_2\).
Combining these sums and separating odd and even cases gives the result.
\end{proof}

\section{Corollaries}
\label{sec:corollaries}

A consequence of Theorem~\ref{thm:main} is that we have defined a normal form for lattice triangles from which their first two widths can be read.
This normal form is compatible with scaling in the sense that for a positive integer \(\lambda\) and a triangle \(T \in \cS_{w_1,w_2}\) we have \(\lambda T \in \cS_{\lambda w_1,\lambda w_2}\).
We can also read the affine automorphism group of a triangle from this normal form as shown in the following corollary.
Let \(S_3\) denote the group of permutations of \(\{0,1,2\}\), written in cycle notation, and \(\Aut(T)\) denote the group of affine maps which map \(T\) to itself.

\begin{corollary}
Let \(T \in \cS_{w_1,w_2}\) then 
\begin{itemize}
    \item \(\Aut(T) \cong S_3\) if and only if one of the following hold
    \begin{itemize}
        \item \(w_1=w_2\) and \(T=((0,0),(w_1,0),(0,w_1))\) or
        \item \(w_1=w_2\) and \(T=((0,0),(w_1,\frac{w_1}{2}),(\frac{w_1}{2},w_1))\)
    \end{itemize}
    \item \(\Aut(T) \cong \langle(012)\rangle\) if and only if the following holds
    \begin{itemize}
        \item \(w_1=w_2\) and \(T=\conv((0,0), (w_1,y_1), (w_1-y_1,w_1))\) such that \(y_1 \neq \frac{w_1}{2}\)
    \end{itemize}
    \item \(\Aut(T) \cong \langle(01)\rangle\) if and only if one of the following hold
    \begin{itemize}
        \item \(T=\conv((0,0),(w_1,y_1),(0,w_2))\) such that \(y_1 \equiv (w_2-y_1 \mod w_1)\) and either \(y_1>0\) or \(w_1<w_2\),
        \item \(T=\conv((0,0),(w_1,0),(\frac{w_1}{2},w_2))\)
        \item \(T=\conv((0,0),(w_1,\frac{w_1}{2}),(\frac{w_1}{2},w_2))\) and \(w_1<w_2\),
        \item \(T=\conv((0,0),(w_1,y_1),(y_1,w_1))\), \(w_1=w_2\) and \(2y_1 < w_1\)
    \end{itemize}
    \item \(\Aut(T) \cong \{\iota\}\) otherwise.
\end{itemize}
\end{corollary}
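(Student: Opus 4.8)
The plan is to use that, since $w_1>0$, the triangle $T$ is a non-degenerate lattice $2$-simplex, so any affine unimodular automorphism is determined by the permutation it induces on the affinely independent vertices $v_1,v_2,v_3$. Thus the action on vertices embeds $\Aut(T)$ into $S_3$, and it suffices to decide, for each $\sigma\in S_3$, whether the unique affine map $\psi_\sigma$ with $\psi_\sigma(v_i)=v_{\sigma(i)}$ is unimodular. Its linear part is $A_\sigma=[\,v_{\sigma(2)}-v_{\sigma(1)}\mid v_{\sigma(3)}-v_{\sigma(1)}\,]\,[\,v_2-v_1\mid v_3-v_1\,]^{-1}$, and since both bracketed matrices have determinant $\pm$ twice the area of $T$ we always have $\det A_\sigma=\pm1$; hence $\sigma$ is realised by an automorphism if and only if $A_\sigma$ has integer entries. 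Because the set of realised permutations is a subgroup of $S_3$, the isomorphism type of $\Aut(T)$ is then forced by which $\sigma$ are realised: it is $S_3$ if a transposition and a $3$-cycle are both realised, $\langle(123)\rangle$ if a $3$-cycle but no transposition is realised, $\langle(12)\rangle$ if exactly one transposition (and no $3$-cycle) is realised, and trivial otherwise.

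First I would dispose of the generic case $w_1<w_2$. As in the proof of Proposition~\ref{prop:injectivity}, $(1,0)$ is, up to sign, the unique dual vector realising the first width, so every automorphism preserves $\{\pm(1,0)\}$ and therefore has linear part $\begin{pmatrix}\varepsilon_1 & 0\\ c & \varepsilon_2\end{pmatrix}$ with $\varepsilon_i\in\{\pm1\}$, $c\in\ZZ$. Such a matrix has order $3$ only when it equals the identity, and an affine automorphism with identity linear part is a translation, hence trivial; so no $3$-cycle is realised and $\Aut(T)$ is trivial or generated by a transposition. For each of the three families \eqref{item:long_edge}, \eqref{item:short_edge_1}, \eqref{item:short_edge_2} I would then run through the admissible permutations, impose that $\psi_\sigma$ has the above shape, and solve the resulting small linear system in $w_1,w_2,x_2,y_0,y_1$. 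This is routine bookkeeping and returns exactly the listed triangles, namely $T(0,(w_1,y_1),(0,w_2))$ with $y_1\equiv w_2-y_1\pmod{w_1}$ (the clause ``$y_1>0$ or $w_1<w_2$'' excluding the lone case in which this triangle is the $S_3$-symmetric standard simplex), $T(0,(w_1,0),(\tfrac{w_1}{2},w_2))$, and $T(0,(w_1,\tfrac{w_1}{2}),(\tfrac{w_1}{2},w_2))$ with $w_1<w_2$, all remaining triangles having trivial automorphism group.

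The main obstacle is the case $w_1=w_2$, where a triangle may have two or three distinct minimal-width directions and the rigidity of the previous paragraph fails. Here I would proceed in three steps. (i) Determine the triangles of $\cS_{w_1,w_1}$ admitting an order-$3$ automorphism by computing $A_\sigma$ for a $3$-cycle directly: such a map fixes the centroid and in the basis $(v_2-v_1,v_3-v_1)$ has the fixed order-$3$ linear part $\begin{pmatrix}-1 & -1\\ 1 & 0\end{pmatrix}$, and demanding that this actually lie in $\GL_2(\ZZ)$ — that is, preserve $\ZZ^2$ and not merely the sublattice spanned by the edge vectors — isolates the triangles $T(0,(w_1,y_1),(w_1-y_1,w_1))$ together with the standard simplex $T(0,(w_1,0),(0,w_1))$. (ii) Among these, exhibit an order-$2$ automorphism precisely when $y_1=w_1/2$ or $T=T(0,(w_1,0),(0,w_1))$, giving $\Aut(T)\cong S_3$ there and $\Aut(T)\cong\langle(123)\rangle$ otherwise. (iii) For the remaining triangles of $\cS_{w_1,w_1}$ — those with no order-$3$ automorphism — repeat the transposition analysis; although the linear part is no longer restricted a priori, a transposition-type automorphism still fixes one vertex and swaps the other two, so one again reduces to a linear system, and this singles out $T(0,(w_1,y_1),(y_1,w_1))$ with $2y_1<w_1$ as the only further family with $\Aut(T)\cong\langle(12)\rangle$. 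Throughout, the real care lies in the boundary cases of the defining inequalities of $\cS_{w_1,w_2}$ (such as $x_2=w_1/2$, $y_1=x_2$, $y_0=x_2$), which must be checked not to produce spurious coincidences, and in tracking that the three conjugate transposition subgroups $\langle(12)\rangle$, $\langle(13)\rangle$, $\langle(23)\rangle$ are each detected even though they are abstractly isomorphic; these bookkeeping points, rather than any single difficult idea, are where the work is.
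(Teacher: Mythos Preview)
Your proposal is correct and shares its core idea with the paper: embed $\Aut(T)$ in $S_3$ via the vertex action, write down the unique affine map $\psi_\sigma$ realising each permutation, and test whether its linear part lies in $\GL_2(\ZZ)$. The paper does this by associating to $T$ the $3\times 3$ matrix $A_T$ with columns $(v_i,1)$ and computing $U_\sigma=\sigma A_T\,A_T^{-1}$ for each $\sigma$, then grinding through the entries case by case for types \eqref{item:long_edge}, \eqref{item:short_edge_1}, \eqref{item:short_edge_2}; your $2\times 2$ formulation is equivalent.

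Where you genuinely diverge is in the organisation. The paper splits first by triangle type and treats all permutations uniformly by matrix calculation. You split first by whether $w_1<w_2$ or $w_1=w_2$, and in the former case invoke the uniqueness (up to sign) of the width-$w_1$ direction to force the linear part of any automorphism to be lower triangular, whence no order-$3$ element can occur. This is a clean structural shortcut that the paper does not use; it replaces roughly half of the paper's explicit $U_\sigma$ computations by a one-line eigenvalue argument, at the cost of having to run a separate and slightly less constrained analysis when $w_1=w_2$. Both routes leave the same residue of bookkeeping at the boundary values ($x_2=w_1/2$, $y_1=x_2$, etc.), and both ultimately identify the same families; your decomposition is arguably more conceptual for $w_1<w_2$, while the paper's uniform matrix approach makes the $w_1=w_2$ case no harder than the rest.
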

\begin{proof}
We consider each of the three types of triangle in \(\cS_{w_1,w_2}\) and find conditions for them to have each automorphism group.

Triangles \(T\) of type (A) have automorphism group \(S_3\) when \(w_1=w_2\) and \(y_1=0\) since this is just a dilation of the standard simplex \(\conv((0,0),(1,0),(0,1))\).
Otherwise any automorphism of \(T\) must preserve the edge from \((0,0)\) to \((0,w_2)\) so we are reduced to maps of the form \((x,y) \mapsto (x,y+kx)\) and \((x,y) \mapsto (x,w_2-y+kx)\) for integers \(k\).
The first of these maps can only map \(T\) to itself if \(k=0\) which is just the identity map.
The second can only map \(T\) to itself if \(y_1 = (w_2 - y_1 \mod w_1)\).
So \(T\) has automorphism group isomorphic to \(\langle(12)\rangle\) in this case and trivial automorphism group otherwise.

For a triangle \(T\) of type (B) let \(\varphi\) be an affine map taking \(T\) to itself.
It is defined by multiplication by a unimodular matrix \(U \in \GL_2(\ZZ)\) followed by a translation \(t \in \ZZ^2\).
Let \(v_0=(0,0)\), \(v_1=(w_1,y_1)\) and \(v_2=(x_2,w_2)\) be the vertices of \(T\) then the set of numbers \(\{(1,0) \cdot \varphi(v_i) : i=0,1,2\}\) is equal to \(\{0,x_2,w_1\}\).
This means that the set of numbers \((1,0)U(v_i)\) is equivalent to \(\{0,x_2,w_1\}\) and so \(T\) has width \(w_1\) with respect to the vector \((1,0) U\).
If \(w_1<w_2\) this forces the first row of \(U\) to be \((\pm1,0)\).
Otherwise, by the proof of Lemma~\ref{lemma:x_2_min} the first row of \(U\) must be \((\pm1,0)\), \((0,\pm1)\) or \((\pm1,\mp1)\).
Therefore, for some integer \(k\), \(U\) is one of the following
\[
\begin{pmatrix}
    1 & 0 \\
    k & \pm1
\end{pmatrix},
\begin{pmatrix}
    -1 & 0 \\
    k & \pm1
\end{pmatrix},
\begin{pmatrix}
    0 & 1 \\
    \pm1 & k
\end{pmatrix},
\begin{pmatrix}
    0 & -1 \\
    \pm1 & k
\end{pmatrix},
\begin{pmatrix}
    1 & -1 \\
    \pm1-k & k
\end{pmatrix},
\begin{pmatrix}
    -1 & 1 \\
    \pm1-k & k
\end{pmatrix}.
\]
The image of \(T\) under each of these maps, in the same order, is
\begin{enumerate}
    \item[] \(T_1=\conv((0,0),(w_1,kw_1\pm y_1),(x_2,kx_2\pm w_2))\)
    \item[] \(T_2=\conv((0,0),(-w_1,kw_1\pm y_1),(-x_2,kx_2\pm w_2))\)
    \item[] \(T_3=\conv((0,0),(y_1,\pm w_1+ky_1),(w_2,\pm x_2+kw_2))\)
    \item[] \(T_4=\conv((0,0),(-y_1,\pm w_1+ky_1),(-w_2,\pm x_2+kw_2))\)
    \item[] \(T_5=\conv((0,0),(w_1-y_1,(\pm1-k)w_1+ky_1),(x_2-w_2,(\pm1-k)x_2+kw_2))\)
    \item[] \(T_6=\conv((0,0),(y_1-w_1,(\pm1-k)w_1+ky_1),(w_2-x_2,(\pm1-k)x_2+kw_2))\).
\end{enumerate}
These must be translations of \(T\).
In each case, sorting the vertices by the size of their \(x\)-coordinates indicates the permutation of the vertices of \(T\) they correspond to.
This allows us to find the translation vector \(t\).
The sign of the permutation indicates weather the determinant of \(U\) is positive or negative.
This is enough to explicitly compute \(k\) and obtain a collection of restrictions on the variables.
In the following we execute this procedure in each case:

If \(T_1\) is a translation of \(T\) it comes from the identity permutation, so \(t=(0,0)\) and the determinant of \(U\) is 1.
By equating the vertices of \(T_1+t\) and \(T\) we can show that \(k=0\) so \(\varphi\) is the identity map.

If \(T_2\) is a translation of \(T\) it comes from the permutation \((01)\) of the vertices, so \(t=(w_1,y_1)\) and the determinant of \(U\) is \(-1\).
By equating the vertices of \(T_2+t\) and \(T\) we can show that \(2x_2=w_1\) and \(k=-2y_1/w_1\) which is only an integer if \(y_1=0\) or \(y_1=w_1/2\).
Then \(k=0\) or \(-1\) each of which lead to valid automorphisms.

If \(T_3\) is a translation of \(T\) it comes from the permutation \((12)\) of the vertices, so \(t=(0,0)\) and the determinant of \(U\) is \(-1\).
By equating the vertices of \(T_3+t\) and \(T\) we can show that \(y_1=x_2\), \(w_1=w_2\) and \(k=0\) which leads to a valid automorphism of \(T\).

If \(T_4\) is a translation of \(T\) it comes from the permutation \((012)\) of the vertices, so \(t=(w_1,y_1)\) and the determinant of \(U\) is \(1\).
By equating the vertices of \(T_4+t\) and \(T\) we can show that \(w_1=w_2=x_2+y_1\) and \(k=-1\) which leads to a valid automorphism of \(T\).

If \(T_5\) is a translation of \(T\) it comes from the permutation \((02)\) of the vertices, so \(t=(x_2,w_2)\) and the sign of \(U\) is \(-1\). 
By equating the vertices of \(T_5+t\) and \(T\) we can show that \(w_1=w_2=2x_2=2y_1\) and \(k=-1\) which gives a valid automorphism of \(T\).

If \(T_6\) is a translation of \(T\) it comes from the permutation \((021)\) of the vertices, so \(t=(x_2,w_2)\) and the determinant of \(U\) is \(1\). 
By equating the vertices of \(T_6+t\) and \(T\) we can show that \(w_1=w_2=x_2+y_1\) and \(k=0\) which gives a valid automorphism of \(T\).

Notice that there is overlap between some of these cases.
For example if \(T_5\) is a translation of \(T\) then \(T\) automatically satisfies the conditions for all other \(T_i\) to be a translation of \(T\) so \(T\) has automorphism group \(S_3\).
By considering the ways in which \(T\) can have automorphism group isomorphic to each subgroup of \(S_3\) the above give the rest of the triangles in the corollary with non-trivial automorphism group.

Triangles of type (C) all have trivial automorphism group.
We show this using a very similar method to the type (B) triangles above.
For a triangle \(T\) of type (C) let \(\varphi\) be an affine map taking \(T-(0,y_0)\) to itself (where we subtract \((0,y_0)\) so we may assume \(T\) has a vertex at the origin).
This is defined by a unimodular matrix \(U\) and translation vector \(t\).
Since \(w_1<w_2\) the first row of \(U\) must be \((\pm1,0)\) so, for some integer \(k\), \(U\) is one of the following
\[
\begin{pmatrix}
    1 & 0 \\ k & \pm 1
\end{pmatrix}, \quad 
\begin{pmatrix}
    -1 & 0 \\ k & \pm 1
\end{pmatrix}.
\]
The image of \(T-(0,y_0)\) under each of these maps is
\begin{itemize}
    \item[] \(T_7 = \conv((0,0),(w_1,kw_1\mp y_0),(x_2,kx_2\pm (w_2-y_0)))\) or
    \item[] \(T_8 = \conv((0,0),(-w_1,kw_1\mp y_0),(-x_2,kx_2\pm (w_2-y_0)))\).
\end{itemize}

If \(T_7\) is a translation of \(T-(0,y_0)\) it comes from the identity permutation so \(t=(0,0)\) and the determinant of \(U\) is \(1\).
By equating the vertices of \(T_7+t\) and \(T-(0,y_0)\) we see that \(k=0\) so this is the identity map.

If \(T_8\) is a translation of \(T\) it comes from the permutation \((01)\) so \(t=(w_1,-y_0)\) and the determinant of \(U\) is \(-1\).
By equating the \(x\)-coordinates of \(T_8+t\) and \(T-(0,y_0)\) we see that \(w_1-x_2 = x_2\) so \(x_2=w_1/2\) which contradicts the definition of type (C) triangles.
\end{proof}

So far we have restricted ourselves to non-degenerate triangles, that is triangles with non-zero volume.
However, we can extend our definitions and proofs to multi-sets containing three colinear points and call these points the vertices of a degenerate triangle.
Let the widths of such a set be the widths of its convex hull.
It has first and second widths \(0\) and \(l\) where \(l\) is its lattice length.
In this sense we can see that there are \(\left\lceil(w_2+1)/2\right\rceil\) triangles with first width \(0\) and second width \(w_2\) up to affine equivalence for all \(w_2 \geq 0\).
They can be assumed to have vertices \((0,0)\), \((0,y_1)\) and \((0,w_2)\) for integers \(y_1 \in [0,w_2/2]\).

\begin{corollary}\label{cor:hilbert}
The number of (degenerate and non-degenerate) triangles, up to affine equivalence, which are a subset of \([0,n]^2\) is equal to \(|nQ \cap \ZZ^4|\) where \(Q\) is the four-dimensional rational simplex
\[
Q \coloneqq \conv\left(\left(\frac{1}{2},0,0,0\right),\left(0,\frac{1}{2},0,0\right),\left(0,0,\frac{1}{2},0\right),(0,0,0,1),(-1,-1,-1,-1)\right).
\]
Furthermore, the generating function of this sequence is the Hilbert series of a degree 8 hypersurface in \(\PP(1,1,1,2,2,2)\).
\end{corollary}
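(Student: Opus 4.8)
The plan is to convert the statement into a lattice-point count using Proposition~\ref{prop:lex_eq_width}, evaluate that count with the cardinalities of Theorem~\ref{thm:main}, and then recognise the resulting generating function both as an Ehrhart series of \(Q\) and as the Hilbert series of the hypersurface.

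First I would show that the number sought equals \(N(n):=\sum_{0\le w_1\le w_2\le n}|\cT_{w_1,w_2}|\). By Proposition~\ref{prop:lex_eq_width} the affine classes of lattice triangles are partitioned by their pair \((w_1,w_2)\) of first two widths with \(w_1\le w_2\), and each class in \(\cT_{w_1,w_2}\) has a representative inside \([0,w_1]\times[0,w_2]\); so it suffices to show that a lattice triangle \(T\) has a representative in \([0,n]^2\) precisely when \(\width^2(T)\le n\). If \(\width^2(T)\le n\) this is clear, as \([0,w_1]\times[0,w_2]\subseteq[0,n]^2\). Conversely, if \(T\subseteq[0,n]^2\) then \(\width_{(1,0)}(T),\width_{(0,1)}(T)\le n\); picking any dual vector \(u\) with \(\width_u(T)=\width^1(T)\), at least one of \((1,0),(0,1)\) is linearly independent from \(u\), and the lexicographic minimality defining \(\width^2(T)\) then forces \(\width^2(T)\le n\).

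Next I would evaluate \(N(n)\) using Theorem~\ref{thm:main}. Each \(|\cT_{w_1,w_2}|\) is a quadratic quasi-polynomial of period \(2\) in \((w_1,w_2)\), so summing over the triangular index set \(\{0\le w_1\le w_2\le n\}\) makes \(N(n)\) a quasi-polynomial in \(n\) of period dividing \(2\) and degree \(4\); equivalently \(\sum_{n\ge0}N(n)t^n=h(t)/(1-t^2)^5\) for a polynomial \(h\) with \(\deg h\le9\). Evaluating \(N(0),\dots,N(9)\) directly from Theorem~\ref{thm:main} --- separating the terms \(|\cT_{0,w_2}|\) and the diagonal terms \(|\cT_{w_2,w_2}|\) from the off-diagonal ones and tracking the parities of \(w_1\) and \(w_2\) --- then pins down \(h\): one obtains \(N(n)=1,3,9,19,39,69,119,189,293,\dots\) and
\[
\sum_{n\ge0}N(n)\,t^n=\frac{1-t^8}{(1-t)^3(1-t^2)^3}.
\]
Since the Hilbert series of \(\PP(1,1,1,2,2,2)\) is \(\big((1-t)^3(1-t^2)^3\big)^{-1}\) and the homogeneous coordinate ring of a degree-\(8\) hypersurface is obtained from that graded ring by quotienting by a degree-\(8\) non-zerodivisor, this already proves the ``furthermore'' clause.

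Finally I would identify \(N(n)\) with \(|nQ\cap\ZZ^4|\). As \(Q\) is a rational \(4\)-simplex whose vertices have denominator \(2\), the function \(E(n):=|nQ\cap\ZZ^4|\) is likewise a quasi-polynomial of period dividing \(2\) and degree \(4\), hence is determined by the ten values \(E(0),\dots,E(9)\). Computing these directly --- for instance from the five facet inequalities of \(Q\) --- and comparing with the list above gives \(E(n)=N(n)\) for \(0\le n\le9\), and therefore for all \(n\). The step I expect to be the main obstacle is the evaluation of \(N(n)\): summing the three parity-dependent formulas of Theorem~\ref{thm:main} over \(\{0\le w_1\le w_2\le n\}\) splits into several subcases, and one must carefully confirm the period-\(2\), degree-\(4\) shape so that a finite comparison of \(N\) and \(E\) is conclusive. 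A cleaner but more delicate route would skip the last step by constructing an explicit bijection between \(\bigsqcup_{0\le w_1\le w_2\le n}\cS_{w_1,w_2}\) and \(nQ\cap\ZZ^4\), re-coordinatising the parameters of the three families comprising \(\cS_{w_1,w_2}\); matching those three families with the facet structure of the rational simplex \(Q\) would then be the crux.
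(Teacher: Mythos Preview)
Your argument is correct, and the overall strategy---reduce to $N(n)=\sum_{0\le w_1\le w_2\le n}|\cT_{w_1,w_2}|$ via Proposition~\ref{prop:lex_eq_width}, evaluate using Theorem~\ref{thm:main}, and then recognise the result as both a Hilbert series and an Ehrhart count---matches the paper. The execution, however, is genuinely different. The paper assembles the cardinalities of Theorem~\ref{thm:main} into the closed-form bivariate generating function $\sum_{w_1\ge0}\sum_{w_2\ge w_1}|\cT_{w_1,w_2}|\,t^{w_1}s^{w_2}$, specialises $t=1$ to get the count for triangles fitting in $[0,n]^2$ and no smaller square, and then multiplies by $(1-s)^{-1}$; the identification with $\Ehr_Q$ is then a single rational-function identity. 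You instead argue structurally that both $N(n)$ and $|nQ\cap\ZZ^4|$ are quasi-polynomials of period dividing $2$ and degree at most $4$ (the latter because $2Q$ is a lattice simplex), so that matching ten initial values suffices. The paper's route is algebraically cleaner---one manipulation rather than twenty numerical evaluations---and produces the bivariate series as a by-product; your route is more elementary, avoids any generating-function manipulation, and makes explicit the finite verification behind the equality with $\Ehr_Q$, which the paper simply asserts. Your justification that a triangle has a representative in $[0,n]^2$ exactly when $\width^2(T)\le n$ is also spelt out more carefully than in the paper.
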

\begin{proof}
First we observe that when \(w_1>0\)
\begin{align*}
    \sum_{w_2=w_1}^\infty s^{w_2}|\cT_{w_1,w_2}|
    = & s^{w_1}\left[\frac{w_1^2(1+s)}{4(1-s)} + \frac{w_1}{2} + \frac{5+6s+5s^2}{8(1-s^2)}+(-1)^{w_1}\frac{3+2s+3s^2}{8(1-s^2)} \right]
\end{align*}
and when \(w_1=0\)
\begin{align*}
    \sum_{w_2=w_1}^\infty s^{w_2}|\cT_{w_1,w_2}|
    = & \frac{1}{(1-s^2)(1-s)}.
\end{align*}
From this we can compute that the generating function of \(|\cT_{w_1,w_2}|\) is
\begin{align}\label{eq:rectangle_gen_func}
    \sum_{w_1=0}^\infty \sum_{w_2=w_1}^\infty t^{w_1}s^{w_2}|\cT_{w_1,w_2}|
    = & \frac{-s^7t^4 + s^6t^3 - s^5t^2 + s^4t^3 - s^3t + s^2t^2 - st + 1}{(1-s)^2(1+s)(1-st)^3(1+st)}.
\end{align}
Let \(T\) be a triangle with first and second width \(w_1\) and \(w_2\) respectively.
To be equivalent to a subset of a square \(T\) must have the same width in two linearly independent directions.
Therefore, the smallest square which \(T\) is equivalent to a subset of has side length \(w_2\).
This means that the generating function of the number of triangles which can be contained in \([0,w_2]^2\) and no smaller square is obtained by setting \(t=1\) in \eqref{eq:rectangle_gen_func}.
This gives
\[
\frac{1-s^8}{(1-s^2)^3(1-s)^2}.
\]
To compute instead the generating function of the number of triangles which are affine equivalent to a subset of \([0,w_2]^2\) (and possibly smaller squares) we divide this by \((1-s)\).
The result is the Hilbert series of a degree 8 hypersurface in the weighted projective space \(\PP(1,1,1,2,2,2)\) and also the Ehrhart series of \(Q\), that is
\[
\Ehr_Q(t) \coloneqq \sum_{n=0}^{\infty} |nQ \cap \ZZ^4|s^n = \frac{1-s^8}{(1-s^2)^3(1-s)^3}.
\]
The Ehrhart series of \(Q\) can be found using computational algebra.
By definition of the Ehrhart series this proves the result.
These two descriptions of the generating function are related via mirror symmetry in the sense of \cite{mirror_symmetry}.
Let \(f\) be a Laurent polynomial which is a mirror partner for a degree 8 hypersurface in \(\PP(1,1,1,2,2,2)\), then \(Q\) is the dual of the Newton polytope of \(f\).
\end{proof}

Finally, we note some sub-sequences which appear in Sloane's On-Line Encyclopedia of Integer Sequences (OEIS) \cite{OEIS}.
We refer to these sequences by their OEIS reference number.
The sequence counting triangles which have an edge of lattice length \(n\) and are a subset of \([0,n]^2\) is A140144.
The sequence counting these same triangles but excluding those with zero volume is A135276.
Let \((a_n)_{n \geq 0}\) be the sequence counting triangles which are a subset of an \(n\times n\) square and no smaller and which also have no edge of lattice length \(n\).
Then the sequence \((a_n-a_{n-1})_{n\geq 1}\) is the rounded up staircase diagonal on the natural numbers, shown in Figure~\ref{fig:staircase}, which is A080827. 
We distinguish between triangles with an edge of lattice length \(n\) and not since these are exactly the triangles of type (A) and those with the longest possible edge contained in a square.
It is not obvious why any of these sequences should coincide.

\begin{figure}[ht]
\begin{tikzpicture}[x=0.8cm,y=0.8cm]
\tikzstyle{every node}=[font=\small]
\draw (0,0) -- (1,0) -- (1,1)--(2,1)--(2,2)--(3,2)--(3,3)--(4,3)--(4,4)--(5,4)--(5,5);
\filldraw[fill=white] (0,0) circle (0.3);
\filldraw[fill=white] (1,0) circle (0.3);
\filldraw[fill=white] (1,1) circle (0.3);
\filldraw[fill=white] (2,1) circle (0.3);
\filldraw[fill=white] (2,2) circle (0.3);
\filldraw[fill=white] (3,2) circle (0.3);
\filldraw[fill=white] (3,3) circle (0.3);
\filldraw[fill=white] (4,3) circle (0.3);
\filldraw[fill=white] (4,4) circle (0.3);
\filldraw[fill=white] (5,4) circle (0.3);
\filldraw[fill=white] (5,5) circle (0.3);
\def\numberlist{{1,3,6,10,15,21,2,5,9,14,20,27,4,8,13,19,26,34,7,12,18,25,33,42,11,17,24,32,41,51,16,23,31,40,50,61}} 
  \foreach \x in {0,...,5}
    \foreach \y in {0,...,5}{ 
	\pgfmathtruncatemacro{\tmp}{\x + \y*6}
       \pgfmathtruncatemacro{\label}{\numberlist[\tmp]}
       \node[]  (\x\y) at (\x,\y) {\label};
} 
\end{tikzpicture}
\centering
\caption{The rounded up staircase diagonal on the natural numbers.}
\label{fig:staircase}
\end{figure}

\section{Ehrhart theory of lattice triangles}
\label{sec:ehrhart}

In this section we discuss the set of pairs \((b,i)\) where \(b\) and \(i\) are the number of boundary and interior points of a non-degenerate lattice triangle.
For a polygon \(P\) let \(b(P)\) denote the number of boundary lattice points and \(i(P)\) the number of interior lattice points.
The Ehrhart polynomial of a lattice polygon is
\[
\ehr_P(n) = \left(i(P)+\frac{b(P)}{2}-1\right)n^2 + \frac{b(P)}{2}n + 1
\]
which counts the number of lattice points in \(nP\).
Therefore, studying pairs \((b(P),i(P))\) is equivalent to studying the Ehrhart theory of lattice polygons.
In \cite{HNO} it was proven that for integers \(c\geq 1\) the cones 
\[
\sigma_c^\circ \coloneqq \left\{(b,i) \in \RR^2_{\geq0} : \frac{c-1}{2}b-(c-1) < i < \frac{c}{2}b - c(c+2)\right\}
\] 
contain no points \((b(T),i(T))\) where \(T\) is a lattice triangle.
The boundaries of consecutive cones are parallel lines and describe strips of the plane where points \((b(T),i(T))\) can fall.
Hofscheier--Nill--\"Oberg observed that there are periodic patterns in the points in these strips which can be observed in Figure~\ref{fig:b_i_main_plot} and \ref{fig:b_i_coloured}.

In Figure~\ref{fig:b_i_main_plot} and we plot points \((b(T),i(T))\) and denote which are realised by a triangle with or without an edge of lattice length \(\width^2(T)\).
The periodic strips seem to be made up of triangles with a long edge,
suggesting that the patterns observed relate somehow to a triangle having one relatively long edge.
Additionally, Figure~\ref{fig:b_i_coloured} shows the same plot only now it is coloured by the smallest and largest first and second width of lattice triangles realising each point.
In all of these plots, clear patterns emerge in the strips away from the \(i\)-axis.
When coloured by first width, each strip has its own consistent colour.
When coloured by second width, each strip has an even gradient of colour.
This suggests that the points in a given strip are realised mainly by triangles with fixed \(w_1\) and evenly increasing \(w_2\).
We make all of this precise in the following result.
\begin{figure}
\centering
    \begin{tikzpicture}
    \begin{axis}[axis lines=left, xmin=0, ymin=-1, xlabel = {Number of boundary points}, ylabel = {Number of interior     points}, width=0.9\textwidth]
    \addplot[only marks, mark=*, mark size=0.7pt]
    table{long_edge_ehr.dat};
    \addplot[only marks, mark=x, mark size=1.8pt]
    table{short_edge_ehr.dat};
    \end{axis}
    \end{tikzpicture}
\caption{Number of boundary and interior points of lattice triangles with multi-width \((w_1,w_2)\) where \(0 < w_1 \leq w_2 \leq 100\) and where the number of interior or boundary points does not exceed 100.
Dots (resp. crosses) denote points which are realised by triangles with (resp. without) an edge of length \(w_2\).
Some points are realised by both.}
\label{fig:b_i_main_plot}
\end{figure}
\begin{figure}
\centering
\subfloat[Coloured by the smallest first width of triangles realising points.]{
    \begin{tikzpicture}
    \begin{axis}[axis lines=left, xmin=0, ymin=-1, xlabel = {}, ylabel = {}, width=0.48\textwidth]
    \addplot+[only marks, mark=*, mark size=0.5pt, scatter, point meta=explicit]
    table[meta=w1_min]{b_i_to_colour2.dat};
    \end{axis}
    \end{tikzpicture}
}
\subfloat[Coloured by the largest first width of triangles realising points]{
    \begin{tikzpicture}
    \begin{axis}[axis lines=left, xmin=0, ymin=-1, xlabel = {}, ylabel = {}, width=0.48\textwidth]
    \addplot+[only marks, mark=*, mark size=0.5pt, scatter, point meta=explicit]
    table[meta=w1_max]{b_i_to_colour2.dat};
    \end{axis}
    \end{tikzpicture}
}\\
\subfloat[Coloured by the smallest second width of triangles realising points.]{
    \begin{tikzpicture}
    \begin{axis}[axis lines=left, xmin=0, ymin=-1, xlabel = {}, ylabel = {}, width=0.48\textwidth]
    \addplot+[only marks, mark=*, mark size=0.5pt, scatter, point meta=explicit]
    table[meta=w2_min]{b_i_to_colour2.dat};
    \end{axis}
    \end{tikzpicture}
}
\subfloat[Coloured by the largest second width of triangles realising points.]{
    \begin{tikzpicture}
    \begin{axis}[axis lines=left, xmin=0, ymin=-1, xlabel = {}, ylabel = {}, width=0.48\textwidth]
    \addplot+[only marks, mark=*, mark size=0.5pt, scatter, point meta=explicit]
    table[meta=w2_max]{b_i_to_colour2.dat};
    \end{axis}
    \end{tikzpicture}
}
\caption{The plot shown in Figure~\ref{fig:b_i_main_plot}, now coloured according to the widths of triangles which realise each point.}
\label{fig:b_i_coloured}
\end{figure}
\begin{proposition}
Let \(S_w\) be the set of points \((b(T),i(T))\) where \(T\) has width \(w\) with respect to the normal to one of its edges.
Then we have
\[
S_w \subseteq \left\{(b,i) \in \ZZ^2: 1-w^2 \leq i-\frac{w-1}{2}b \leq 1-w\right\}
\]
and 
\[
S_w +\left(w,\frac{w(w-1)}{2}\right) \subseteq S_w 
\]
In fact, there are finitely many triangles \(T_1, \dots, T_r\), which have width \(w\) with respect to a normal to an edge, such that all points of \(S_w\) can be written \((b(T_j),i(T_j)) + k(w,w(w-1)/2)\) for some positive integer \(k\).
In other words, the points of \(S_w\) form a periodic pattern in a strip of the plane, generated by a finite collection of triangles.
The set \(S\) of points \((b(T),i(T))\) where \(T\) is a lattice triangle is the union of these \(S_w\) for \(w \geq 1\).
\end{proposition}

\begin{proof}
It is immediate that \(S\) is the union of the sets \(S_w\) for \(w \geq 1\) so it remains to prove the properties of \(S_w\).
Let \(T\) be a lattice triangle and say that \(u\) is a normal to one of its edges such that \(\width_u(T)=w\).
There is an affine transformation which takes one of the end points of this edge to the origin, the other to the positive \(x\)-axis and the third vertex above the \(x\)-axis.
We now have a triangle of the form \(\conv((0,0),(l,0),(a,w))\) for some positive integer \(l\).

Pick's Theorem tells us that the normalised volume of a lattice polygon \(P\) is \(2i(P)+b(P)-2\).
The normalised volume of \(T\) is \(lw\) and its number of boundary points is at most \(l+2w\) and at least \(l+2\).
Notice that \(i(T)-(w-1)b(T)/2\) is equal to \((2i(T)+b(T))/2-wb(T)/2\) and, using Pick's Theorem, this is equal to \((lw+2)/2 - wb(T)/2\).
Therefore, the bounds on \(b(T)\) give
\[
1-w^2 \leq i(T)-\frac{w-1}{2}b(T) \leq 1-w.
\]

Now consider the triangle \(T' = \conv((0,0),(l+w,0),(a,w)\).
Its volume is \((l+w)w\) and it has \(b(T) + w\) boundary points.
From Pick's Theorem we can compute that it has \(i(T) + w(w-1)/2\) interior points.
This shows that \(S_w + (w,w(w-1)/2)\) is a subset of \(S_w\).

To show that there is a finite collection of triangles generating \(S_w\), consider the triangle \(\conv((0,0),(l-kw,0),(a,w))\), where \(k\) is the integer such that \(l-kw\) is the smallest positive integer possible.
This has volume \((l-kw)w\), \(b(T)-kw\) boundary points and \(i(T)-kw(w-1)/2\) interior points.
Therefore, if \(k>0\) the point \((b(T),i(T))\) can be obtained from another point in \(S_w\) by adding \((w,w(w-1)/2)\).
If \(k=0\) then \(l \in (0,w]\).
We may assume by a shear that \(a \in [0,w)\).
Therefore, there are finitely many choices for \(T\).
\end{proof}

Notice that the lower and upper boundaries of \(S_w\) are in the same hyperplane as the upper boundary of \(\sigma_{w-1}\) and lower boundary of \(\sigma_w\) respectively.
However, this result does not reproduce the result of \cite{HNO} since, as subsets of the real plane, the strips containing the sets \(S_w\) and the cones \(\sigma_c\) intersect non-trivially.
This intersection could potentially contain points \((b,i)\) if it were not for the proof that the cones are empty.
\bibliographystyle{alpha}
\bibliography{TrianglesBib}{}
\end{document}